\newcommand{\comment}[1]{}
\newcommand{\clh}{\mathcal{H}}
\newcommand{\clk}{\mathcal{K}}
\theoremstyle{theorem}
    \newtheorem{theorem}{Theorem}
    \newtheorem{lemma}[theorem]{Lemma}
\theoremstyle{definition} 
    \newtheorem{definition}[theorem]{Definition}
    \newtheorem{remark}[theorem]{Remark}
    \newtheorem{example}[theorem]{Example}
    \newtheorem{exercise}[theorem]{Exercise}
\def\<{\langle}
\def\>{\rangle}
\def\bar{\overline}
\newcommand\mnote[1]{} 
\newcommand\be{\begin{equation*}}
\newcommand\ee{\end{equation*}}
\newcommand\ben{\begin{equation}}
\newcommand\een{\end{equation}}
\newcommand\bes{\begin{eqnarray*}}
\newcommand\ees{\end{eqnarray*}}
\newcommand\bex{\begin{exercise}}
\newcommand\eex{\end{exercise}}
\newcommand\beg{\begin{example}}
\newcommand\eeg{\end{example}}
\newcommand\benu{\begin{enumerate}}
\newcommand\eenu{\end{enumerate}}
\newcommand\beit{\begin{itemize}}
\newcommand\eeit{\end{itemize}}
\newcommand\berk{\begin{remark}}
\newcommand\eerk{\end{remark}}
\newcommand\bdefn{\begin{defintion}}
\newcommand\edefn{\end{definition}}
\newcommand\bthm{\begin{theorem}}
\newcommand\ethm{\end{theorem}}
\newcommand\bprf{\begin{proof}}
\newcommand\eprf{\end{proof}}
\newcommand\blem{\begin{lemma}}
\newcommand\elem{\end{lemma}}
\newcommand{\sm}{{\raise0.3ex\hbox{$\scriptstyle \setminus$}}}
\def\CHI{\mathchoice%
{\raise2pt\hbox{$\chi$}}%
{\raise2pt\hbox{$\chi$}}%
{\raise1.3pt\hbox{$\scriptstyle\chi$}}%
{\raise0.8pt\hbox{$\scriptscriptstyle\chi$}}}
\def\smalloplus{\raise1pt\hbox{$\,\scriptstyle \oplus\;$}}
\numberwithin{equation}{section}
\begin{document}

\title[]{Explicit and unique construction of tetrablock unitary dilation in a certain case}
%
%
%
\author[Bhattacharyya]{ T. Bhattacharyya}

\address{Department of Mathematics,\\
        Indian Institute of Science,\\
        Bangalore 560012, India}

\email{tirtha@math.iisc.ernet.in}
\author[Sau]{H. Sau}
\address{Department of Mathematics,\\
        Indian Institute of Science,\\
        Bangalore 560012, India}

\email{sau10@math.iisc.ernet.in}
\thanks{MSC2010: Primary:47A20, 47A25}
\thanks{Key words and phrases: Tetrablock, Spectral set, Tetrablock contraction, Tetrablock unitary, Dilation.}
\thanks{The authors' research is supported by Department of Science and
Technology, India through the project numbered SR/S4/MS:766/12 and
University Grants Commission, India via DSA-SAP.}
\date{\today}
\maketitle

\begin{abstract}
 Consider the domain $E$ in $\mathbb{C}^3$ defined by
 $$
 E=\{(a_{11},a_{22},\text{det}A): A=\begin{pmatrix} a_{11} & a_{12} \\ a_{21} & a_{22} \end{pmatrix}\text{ with }\lVert A \rVert <1\}.
 $$
 This is called the tetrablock. This paper constructs explicit boundary normal dilation for a triple $(A,B,P)$ of commuting bounded operators which has $\bar{E}$ as a spectral set.

 We show that the dilation is minimal and unique under a certain natural condition. As is well-known, uniqueness of minimal dilation usually does not hold good in several variables, e.g., Ando's dilation is not known to be unique. However, in the case of the tetrablock, the third component of the dilation can be chosen in such a way as to ensure uniqueness.
\end{abstract}

\section{Introduction}
Let $K$ be a compact subset of $\mathbb{C}^d$ for $d \geq 1$. Consider a $d$-tuple $\underline{T}=(T_1,T_2,\dots T_d)$ of commuting bounded operators with $K$ as a spectral set, i.e., the joint spectrum of $\underline{T}$ is contained in $K$ and
$$ ||f(\underline{T})|| \leq \text{sup}\{ |f(z)|: z \in K \} $$
for all rational functions $f$ with poles off $K$. A commuting tuple of bounded normal operators $\underline{N}=(N_1,N_2,\dots N_d)$ with $\sigma$({${\underline{N}}$})$\subset bK$, the distinguished boundary of $K$ is called {\em{a normal boundary dilation}} of $\underline{T}$ if
$$ f(\underline{T}) = P_\mathcal{H} f(\underline{N})|_\mathcal{H},$$
for all rational functions $f$ with poles off $K$. The $K$ we consider in this paper, is a polynomially convex domain. Note that by Oka-Weil theorem, for a polynomially convex domain, a polynomial dilation is the same as a rational dilation. In other words,
$$ T_1^{k_1}\cdots T_d^{k_d}=P_\mathcal{H}N_1^{k_1}\cdots N_d^{k_d}|_\mathcal{H}$$
for $k_1,\dots, k_d \geq 0$.

\begin{definition}
 A triple $(A,B,P)$ of commuting bounded operators on a Hilbert space $\mathcal{H}$ is called a {\em{tetrablock contraction}} if $\overline{E}$ is a spectral set for $(A,B,P)$, i.e., the joint spectrum of $(A,B,P)$ is contained in $\overline{E}$ and
$$
||f(A,B,P)|| \leq ||f||_{\infty,\overline{E}}=\text{sup}\{ |f(x_1,x_2,x_3)|:(x_1,x_2,x_3) \in \overline{E}\}
$$
for any polynomial $f$ in three variables.
\end{definition}

Consider a tetrablock contraction $(A,B,P)$. Then it is easy to see that $P$ is a contraction.
Fundamental equations for a tetrablock contraction are introduced in \cite{sir's tetrablock paper}. These are
\begin{eqnarray}\label{Maa12}
A-B^*P=D_PF_1D_P, \text{ and }  B-A^*P=D_PF_2D_P
\end{eqnarray}
where $D_P=(I-P^*P)^\frac{1}{2}$ is the defect operator of the contraction $P$ and $\mathcal{D}_P=\overline{Ran}D_P$ and $F_1,F_2$ are bounded operators on $\mathcal{D}_P$.
Theorem 3.5 in \cite{sir's tetrablock paper} says that the two fundamental equations can be solved and the solutions $F_1$ and $F_2$ are unique. The unique solutions
$F_1$ and $F_2$ of equations (\ref{Maa12}) are called the {\em{fundamental operators}} of the tetrablock contraction $(A,B,P)$. Moreover, $w(F_1)$ and $w(F_2)$ are not greater than $1$.
The adjoint triple $(A^*,B^*,P^*)$ is also a tetrablock contraction as can be seen from definition. By what we stated above there are unique $G_1,G_2 \in \mathcal{B}(\mathcal{D}_{P^*})$ such that
\begin{eqnarray}\label{Maa13}
A^*-BP^*=D_{P^*}G_1D_{P^*} \text{ and } B^*-AP^*=D_{P^*}G_2D_{P^*},
\end{eqnarray}
and $w(G_1)$ and $w(G_2)$ are not greater than $1$.

\begin{definition}
A {\em{tetrablock unitary}} is a triple of commuting bounded operators $\underline{N}=(N_1,N_2,N_3)$ on a Hilbert space $\mathcal{H}$ such that its Taylor joint spectrum $\sigma({\underline{N}})$ is contained in $bE$, the Shilov boundary of $E$.
 \end{definition}
 \begin{definition}
 A {\em{tetrablock isometry}} is the restriction of a tetrablock unitary to a joint invariant subspace.
 \end{definition}

Let $(A,B,P)$ be a tetrablock contraction on $\clh$ with fundamental operators $F_1$
and $F_2$. Consider the Hilbert space $\tilde{\mathcal{H}}=\mathcal{H} \oplus l^2(\mathcal{D}_P)$. Let $V_1$, $V_2$ and $V_3$ be defined on
$\tilde{\mathcal{H}}$ by
$$ V_1(h \oplus (a_0,a_1,a_2,\dots))=(Ah \oplus (F_2^*D_Ph+F_1a_0, F^*_2 a_0 + F_1a_1, F^*_2 a_1 + F_1a_2, \dots ))
$$
$$V_2(h \oplus (a_0,a_1,a_2,\dots))=(Bh \oplus (F^*_1D_Ph + F_2a_0, F_1^* a_0 + F_2a_1, F_1^* a_1 + F_2a_2, \dots ))
$$
$and$
$$
V_3(h \oplus (a_0,a_1,a_2,\dots))=(Ph \oplus (D_Ph,a_0,a_1,a_2,\dots))
$$
respectively. From Theorem 6.1 of \cite{sir's tetrablock paper}, we learnt that $(V_1,V_2,V_3)$ on $\tilde{\mathcal{H}}$ is a tetrablock isometric dilation of $(A,B,P)$ if $F_1,F_2$ satisfy
\begin{eqnarray}\label{Maa14}
[F_1, F_2] =0 \text{ and } [F_1, F_1^*] = [F_2, F_2^* ].
\end{eqnarray}
Our first major result, described in the theorem below,  is the construction of tetrablock unitary dilation of a tetrablock contraction explicitly.
\\
\begin{theorem}\label{1st major thm}
Let $(A,B,P)$ be a tetrablock contraction on $\clh$ with fundamental operators $F_1$
and $F_2$ satisfying (\ref{Maa14}). Let the space $\tilde{\mathcal{H}}$ and the operator triple $(V_1,V_2,V_3)$ be as above. Consider the space $\mathcal{K}= \tilde{\mathcal{H}} \oplus l^2({\mathcal{D}_{P^*}})$. Define operators $C_1,C_2, C_3:l^2({\mathcal{D}_{P^*}}) \to \tilde{\mathcal{H}}$ by
\begin{eqnarray*}
&&C_1(a_0,a_1,a_2,\dots)=(D_{P^*}G_2a_0 \oplus (-F_2^*P^*a_0,0,0,\dots)),\\
&&C_2(a_0,a_1,a_2,\dots)=(D_{P^*}G_1a_0 \oplus (-F_1^*P^*a_0,0,0,\dots)),\\
&&C_3(a_0,a_1,a_2,\dots)=(D_{P^*}a_0 \oplus (-P^*a_0,0,0,\dots))
\end{eqnarray*}
and $D_1,D_2,D_3:l^2({\mathcal{D}_{P^*}}) \to l^2({\mathcal{D}_{P^*}})$ by
\begin{eqnarray*}
&&D_1(a_0,a_1,a_2,\dots)=(G_1^*a_0+G_2a_1,G_1^*a_1+G_2a_2,G_1^*a_2+G_2a_3,\dots),\\
&&D_2(a_0,a_1,a_2,\dots)=(G_2^*a_0+G_1a_1,G_2^*a_1+G_1a_2,G_2^*a_2+G_1a_3,\dots)\text{ and}\\
&&D_3(a_0,a_1,a_2,\dots)=(a_1,a_2,a_3,\dots) \text{ respectively}.
\end{eqnarray*} Finally, let $R_1$, $R_2$ and $U$ be three operators defined on $\clk$ whose block operator matrices with respect to the decomposition $\mathcal{K}= \tilde{\mathcal{H}} \oplus l^2({\mathcal{D}_{P^*}})$ are
$$ \left(
     \begin{array}{cc}
       V_1 & C_1 \\
       0 & D_1 \\
     \end{array}
   \right), \left(
              \begin{array}{cc}
                V_2 & C_2 \\
                0 & D_2 \\
              \end{array}
            \right) \mbox{ and } \left(
                                   \begin{array}{cc}
                                     V_3 & C_3 \\
                                     0 & D_3 \\
                                   \end{array}
                                 \right) \mbox{ respectively}.
 $$

Then the triple $(R_1, R_2, U)$ is a tetrablock unitary dilation of $(A,B,P)$.
\end{theorem}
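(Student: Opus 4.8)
The plan is to verify the three defining properties of a tetrablock unitary dilation for $(R_1,R_2,U)$: (i) $R_1,R_2,U$ commute; (ii) $(R_1,R_2,U)$ compresses to $(A,B,P)$ on $\clh \subset \clk$; and (iii) $(R_1,R_2,U)$ is a tetrablock unitary, i.e.\ a commuting triple of normal operators with joint spectrum in $bE$. Since $(V_1,V_2,V_3)$ is already a tetrablock \emph{isometric} dilation of $(A,B,P)$ by Theorem~6.1 of \cite{sir's tetrablock paper} (the hypothesis (\ref{Maa14}) is in force), property (ii) will be automatic once we check that $\clh$ is co-invariant for each $R_i$ and that $R_i|_{\clh\text{-part}}$ agrees with $V_i$: because the block matrices are upper-triangular with respect to $\clk=\tilde{\clh}\oplus l^2(\cld_{P^*})$ and $\clh\subset\tilde{\clh}$, the compression of $R_i^{k}$ to $\tilde{\clh}$ is $V_i^{k}$, and then one further compresses to $\clh$ using that $(V_1,V_2,V_3)$ dilates $(A,B,P)$.

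The first real task is the commutativity in (i). Writing $R_1R_2$ and $R_2R_1$ in block form, the diagonal blocks give $[V_1,V_2]=0$ (known) and $[D_1,D_2]=0$; the latter is the exact analogue, on $l^2(\cld_{P^*})$, of the identity $[V_1,V_2]=0$ on $l^2(\cld_P)$, and follows from $[G_1,G_2]=0$ together with $[G_1,G_1^*]=[G_2,G_2^*]$ — which in turn must be derived for the adjoint tetrablock contraction $(A^*,B^*,P^*)$ exactly as (\ref{Maa14}) is assumed for $(A,B,P)$; I would isolate this as a preliminary lemma. The off-diagonal block of $[R_1,R_2]$ is $V_1C_2+C_1D_2-V_2C_1-C_2D_1$, an operator $l^2(\cld_{P^*})\to\tilde{\clh}$; since $C_i$ and $D_i$ only involve the $a_0$ coordinate nontrivially in their range's first slot, this reduces to a finite identity among $A,B,P,F_1,F_2,G_1,G_2,D_P,D_{P^*}$, to be verified using the fundamental relations (\ref{Maa12}), (\ref{Maa13}), the commutativity $[A,B]=[A,P]=[B,P]=0$, and the intertwining identities $PD_P=D_{P^*}P$, $AD_P\!\sim\!D_{P^*}\cdots$ that come from $A-B^*P=D_PF_1D_P$ and its adjoint form. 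The same scheme handles $[R_1,U]$, $[R_2,U]$.

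The second and main task is (iii): showing $(R_1,R_2,U)$ is a tetrablock unitary. The natural route is to show $U$ is unitary, $R_1,R_2$ are normal with $R_1=R_2^*U$, $\|R_1\|\le 1$ (equivalently $w(\,\cdot\,)\le 1$), and $\sigma(R_1,R_2,U)\subset bE$; by the characterization of tetrablock unitaries (the points of $bE$ are exactly $(x_1,x_2,x_3)$ with $|x_3|=1$, $x_1=\bar x_2 x_3$, $|x_1|\le 1$), this is a spectral/algebraic check once the operator identities hold. I would first show $U$ is unitary on $\clk$: it is already an isometry extending the isometry $V_3$ (its block form is the standard ``adding a co-isometric tail'' construction — $C_3,D_3$ are built from the backward shift on $l^2(\cld_{P^*})$ and the column $D_{P^*}a_0\mapsto$, $-P^*a_0\mapsto$, which is exactly the Sch\"affer/minimal-unitary-extension recipe), and surjectivity follows by checking the defect. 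Then I would verify $R_2^*U=R_1$ and $R_1R_1^*=R_1^*R_1$ by block computation, again reducing to the fundamental equations. The hard part — and the step I expect to be the main obstacle — is precisely this bookkeeping: assembling $C_i,D_i$ so that all the cross terms cancel and $(R_1,R_2,U)$ lands on $bE$; it is conceptually a consequence of Theorem~6.1 applied to $(A^*,B^*,P^*)$ ``glued'' to the one for $(A,B,P)$, but making the gluing consistent requires the correct adjoint fundamental operators $G_1,G_2$ in the off-diagonal blocks, and verifying that the resulting $U$ is genuinely a \emph{minimal unitary extension} of $V_3$ so that the Taylor spectrum sits on the Shilov boundary rather than merely in $\bar E$. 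Finally, $\sigma(R_1,R_2,U)\subset bE$ follows from the operator identities $U$ unitary, $R_1=R_2^*U$, $w(R_1)\le1$ via the known spectral description of $bE$; alternatively one invokes that a commuting normal triple with $\bar E$ as spectral set and $|x_3|=1$ on the spectrum automatically has spectrum in $bE$.
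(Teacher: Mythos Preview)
Your overall architecture matches the paper's: first a preliminary lemma transferring (\ref{Maa14}) from $F_1,F_2$ to $G_1,G_2$; then commutativity of $R_1,R_2,U$ via block computations; then the tetrablock-unitary property; then the dilation property from the double upper/lower triangular structure. The paper does exactly this, and in particular your observation that the $G_i$ commutation must be isolated as a lemma is right (the paper's Lemma~\ref{brandnew}).

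Where you diverge, and where there is a real gap, is step (iii). You propose to verify normality of $R_1$ by a direct block computation of $R_1R_1^*=R_1^*R_1$ and then to get $\|R_1\|\le 1$ ``equivalently $w(\cdot)\le1$''. The block check of normality is not just bookkeeping: the off-diagonal entries of $R_1^*R_1-R_1R_1^*$ involve $C_1^*C_1$, $D_1D_1^*$, etc., and do \emph{not} reduce cleanly to the fundamental equations. More importantly, you give no mechanism for the contractivity bound, and that is the one genuinely analytic step in the proof. The paper sidesteps both issues as follows. Using characterization (2) of Theorem~\ref{tetrathm1}, it suffices to show: the triple commutes, $U$ is unitary, $R_1=R_2^*U$, and $R_2$ is a contraction. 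Once commutativity and $R_1=R_2^*U$ are in hand, normality of $R_1,R_2$ is automatic by Fuglede--Putnam (this is the ``Observation'' in Section~2), so no block computation is needed. For $\|R_2\|\le1$ the paper argues indirectly: from the upper-triangular form and a perturbation lemma one has $\sigma(R_2)\subseteq\sigma(V_2)\cup\sigma(D_2)$; $r(V_2)\le1$ since $(V_1,V_2,V_3)$ is a tetrablock isometry; $r(D_2)\le w(D_2)\le1$ because $D_2^*$ is (unitarily) $M_{G_2+zG_1^*}$ and $w(G_2+zG_1^*)\le1$ for $|z|=1$ follows from $w(z_1G_1+z_2G_2)\le1$; hence $r(R_2)\le1$, and then normality plus $\|R_2\|=r(R_2)$ finishes it. This spectral-radius-then-normality trick is the idea your outline is missing; the ``cross-term bookkeeping'' you flag as the hard part is, by comparison, mechanical (and relegated to the Appendix in the paper). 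Finally, once $U$ is unitary, $R_2$ a contraction, $R_1=R_2^*U$ and the triple commutes, $\sigma(R_1,R_2,U)\subset bE$ is already contained in Theorem~\ref{tetrathm1}(2)$\Rightarrow$(1); you do not need a separate spectral argument.
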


This dilation is proved to be minimal. Unlike in the case of a single operator, minimality of a dilation of an $n$-tuple$(n >1)$ of commuting operators does not guarantee its uniqueness. We show that the tetrablock unitary dilation $(R_1,R_2,U)$ defined in Theorem \ref{1st major thm} of a tetrablock contraction $(A,B,P)$ is unique under a certain suitable condition. The uniqueness theorem, the second major result of this paper, states that if $(\tilde{R}_1, \tilde{R}_2, \tilde{U})$ is a tetrablock unitary dilation of a tetrablock contraction $(A,B,P)$ such that $\tilde{U}$ is the minimal unitary dilation of $P$, then the dilation $(\tilde{R}_1, \tilde{R}_2, \tilde{U})$ is unitarily equivalent to the dilation we have constructed. This is the content of Theorem \ref{uniqueness}. Note that if a tetrablock unitary $(\tilde{R}_1, \tilde{R}_2, \tilde{U})$ dilates a tetrablock contraction $(A,B,P)$, then by definition of dilation, the unitary $\tilde{U}$ dilates the contraction $P$ too. So the only constraint we are imposing is that the dilation $\tilde{U}$ is minimal.

Two equations associated with a contraction $P$ and its defect operators that will come handy are
\begin{eqnarray}\label{Maa8}
PD_P=D_{P^*}P
\end{eqnarray}
and its corresponding adjoint relation
\begin{eqnarray}\label{Maa10}
D_PP^*=P^*D_{P^*}.
\end{eqnarray}
Proof of (\ref{Maa8}) and (\ref{Maa10}) can be found in \cite{Nagy-Foias}(ch. 1, sec. 3). We shall use these two relations in this paper without mention.

Note that for a Hilbert space $\mathcal{E}$, the Hilbert space $l^2(\mathcal{E})$ is unitarily equivalent to the Hilbert space $H^2_{\mathcal{E}}(\mathbb{D})$, via the unitary map
$$(\xi_0,\xi_1, \xi_2 \dots) \mapsto \sum_{n=0}^{\infty}z^n\xi_n,$$
 where $\xi_n \in \mathcal{E}$ for all $n \geq 0$.

\section{Elementary Results On A Tetrablock Contraction}
All known facts about tetrablock contractions, tetrablock isometries and tetrablock unitaries that we quote here are from \cite{sir's tetrablock paper}.

There are well known characterizations of a tetrablock isometry and a tetrablock unitary. We just quote some characterizations of a tetrablock unitary because we shall use it later in this paper.
\begin{theorem}\label{tetrathm1}
Let $\underline{N} = (N_1,N_2,N_3)$ be a commuting triple of bounded operators. Then
the following are equivalent:
\begin{enumerate}
\item[(1)] $\underline{N}$ is a tetrablock unitary;
\item[(2)] $N_3$ is a unitary, $N_2$ is a contraction and $N_1 = N_2^*N_3$;
\item[(3)] there is a $2 \times 2$ unitary block operator matrix $[U_{ij} ]$ where $U_{ij}$ are commuting
normal operators and $\underline{N} = (U_{11}, U_{22}, U_{11}U_{22} - U_{21}U_{12})$;
\item[(4)] $N_3$ is a unitary and $\underline{N}$ is a tetrablock contraction;
\item[(5)] the family $\{(R_z, U_z) : |z| = 1\}$ where $R_z = N_1+zN_2$ and $U_z = zN_3$ is a commuting
family of $\Gamma$-unitaries.
\end{enumerate}
\end{theorem}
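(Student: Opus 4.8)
The plan is to prove the equivalences through the cycle $(1)\Rightarrow(4)\Rightarrow(2)\Rightarrow(3)\Rightarrow(1)$ and then to close with $(1)\Leftrightarrow(5)$ separately. Throughout, the two engines will be the spectral theorem for a commuting normal triple (which realizes the $N_i$ as multiplication operators by the coordinate functions on $L^2(\mu)$ over the joint spectrum, so that operator identities reduce to pointwise scalar identities and membership of $\sigma(\underline N)$ in a set reduces to pointwise membership) and the scalar description of the Shilov boundary
$$bE=\{(\bar\lambda\mu,\lambda,\mu)\;:\;|\lambda|\le1,\ |\mu|=1\},$$
i.e. $(x_1,x_2,x_3)\in bE$ iff $x_3$ is unimodular, $|x_2|\le1$ and $x_1=\bar x_2 x_3$. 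Note that this is exactly the scalar form of condition (2), which is why the spectral-theorem passage is so effective.

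For $(1)\Rightarrow(4)$: a tetrablock unitary is a commuting normal triple, so the spectral theorem gives $\|f(\underline N)\|=\sup_{\sigma(\underline N)}|f|\le\sup_{\overline E}|f|$ for every polynomial $f$; hence $\overline E$ is a spectral set and $\underline N$ is a tetrablock contraction, while the third-coordinate projection of $\sigma(\underline N)\subset bE$ lies on the unit circle, forcing the normal operator $N_3$ to be unitary. For $(4)\Rightarrow(2)$: since $N_3$ is unitary its defect $D_{N_3}=(I-N_3^*N_3)^{1/2}$ vanishes, so the fundamental equations (\ref{Maa12}) for $\underline N$ collapse to $N_1-N_2^*N_3=0$ and $N_2-N_1^*N_3=0$, giving $N_1=N_2^*N_3$; that $N_2$ is a contraction follows from the spectral-set property applied to the coordinate function $x_2$, whose modulus supremum over $\overline E$ equals $1$.

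For $(2)\Rightarrow(3)$ I would first extract normality from the hypotheses. Cancelling the unitary $N_3$ in $N_1N_3=N_3N_1$ shows $N_3$ commutes with $N_2^*$, and cancelling it in $N_1N_2=N_2N_1$ yields $N_2^*N_2=N_2N_2^*$; thus $N_2,N_3$ and their adjoints mutually commute, and $N_1=N_2^*N_3$ is normal. Writing $D_{N_2}=(I-N_2^*N_2)^{1/2}$ (which then commutes with $N_2,N_3$), I would exhibit the explicit operator matrix
$$\begin{pmatrix} N_2^*N_3 & -D_{N_2}\\ D_{N_2}N_3 & N_2\end{pmatrix},$$
whose four entries are commuting normal operators. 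A direct computation using $N_2N_2^*=N_2^*N_2$ and $D_{N_2}^2=I-N_2^*N_2$ gives $U^*U=UU^*=I$, and its $(1,1)$, $(2,2)$ and determinant entries are $N_1$, $N_2$ and $N_2^*N_2N_3+D_{N_2}^2N_3=N_3$, so the triple recovered from the block matrix is exactly $(N_1,N_2,N_3)$, which is (3).

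Both $(3)\Rightarrow(1)$ and $(1)\Leftrightarrow(5)$ run through the spectral theorem. For $(3)\Rightarrow(1)$, represent the commuting normal entries as $M_{u_{ij}}$ on $L^2(\mu)$; unitarity of the block matrix means $[u_{ij}(\omega)]$ is a scalar $2\times2$ unitary a.e., and for such a matrix the triple $(u_{11},u_{22},u_{11}u_{22}-u_{21}u_{12})$ has $|\det|=1$, $|u_{22}|\le1$ and $u_{11}=\overline{u_{22}}\det$, hence lies in $bE$; therefore $\underline N$ is a commuting normal triple with $\sigma(\underline N)\subset bE$. For $(1)\Leftrightarrow(5)$ I would record the scalar equivalence $(x_1,x_2,x_3)\in bE\iff(x_1+zx_2,\,zx_3)\in b\Gamma$ for all $|z|=1$ (a short computation matching powers of $z$: unimodularity of $zx_3$ forces $|x_3|=1$, the relation $s=\bar s p$ forces $x_1=\bar x_2x_3$, and $\max_{|z|=1}|x_1+zx_2|=2|x_2|\le2$ forces $|x_2|\le1$, and conversely), and then use spectral mapping to identify $\sigma(R_z,U_z)$ with the $z$-slice image of $\sigma(\underline N)$, so that $\sigma(\underline N)\subset bE$ is equivalent to $(R_z,U_z)$ being a commuting normal pair with spectrum in $b\Gamma$ for every $z$, i.e. (5). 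The step I expect to be the main obstacle is precisely this lifting from scalar membership to the operator level: guessing the correct unitary block matrix in $(2)\Rightarrow(3)$, and, more delicately, justifying the essential-range and spectral-mapping identifications that turn pointwise membership in $bE$ and $b\Gamma$ into statements about the Taylor joint spectrum of a commuting normal tuple; the scalar descriptions of $bE$ and $b\Gamma$ are the geometric input on which the whole argument rests.
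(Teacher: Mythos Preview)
The paper does not actually prove this theorem: immediately after the statement it writes ``For a proof, see \cite{sir's tetrablock paper} (Th.\ 5.4).'' So there is no in-paper argument to compare your proposal against; the result is imported from the reference.

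Your sketch is essentially correct and follows the standard route one finds in the literature. A couple of minor remarks. First, in $(1)\Rightarrow(4)$ you invoke the spectral theorem for a commuting \emph{normal} triple; note that the definition of tetrablock unitary as stated in this paper omits the word ``normal'' (compare with the general notion of normal boundary dilation in the Introduction and with the original definition in \cite{sir's tetrablock paper}), so you are tacitly using the intended definition rather than the one literally printed here. Second, in $(1)\Leftrightarrow(5)$ your claim $\max_{|z|=1}|x_1+zx_2|=2|x_2|$ is only valid once you already know $|x_1|=|x_2|$, which follows from $x_1=\bar x_2 x_3$ and $|x_3|=1$; it would be cleaner to write $\max_{|z|=1}|x_1+zx_2|=|x_1|+|x_2|$ and then use $|x_1|=|x_2|$. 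With these cosmetic adjustments your cycle $(1)\Rightarrow(4)\Rightarrow(2)\Rightarrow(3)\Rightarrow(1)$ and the separate $(1)\Leftrightarrow(5)$ go through, and the explicit unitary block matrix you wrote down in $(2)\Rightarrow(3)$ is exactly the right choice.
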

For a proof, see \cite{sir's tetrablock paper}(Th. 5.4).
Before going to construct the tetrablock unitary dilation of a tetrablock contraction, let us study few lemmas which will be used in the construction. First we state a very important result from \cite{sir's tetrablock paper}(Corollary 4.2).
\begin{lemma}\label{tetra}
The fundamental operators $F_1$ and $F_2$ of a tetrablock contraction $(A,B,P)$
are the unique bounded linear operators on $\mathcal{D}_P$ that satisfy the pair
of operator equations
\begin{eqnarray*}
D_PA = F_1D_P + F_2^*D_PP \text{ and } D_P B = F_2D_P + F_1^*D_PP.
\end{eqnarray*}
\end{lemma}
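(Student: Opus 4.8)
The plan is to verify Lemma~\ref{tetra} by applying the operator $D_P$ on the left of the two fundamental equations~(\ref{Maa12}) and then using the commutation relation~(\ref{Maa8}) between $P$ and its defect operators. Concretely, from $A-B^*P=D_PF_1D_P$ I would multiply on the left by $D_P$ and combine with the second fundamental equation, and similarly from $B-A^*P=D_PF_2D_P$, so that all terms are expressed through $D_P$, $D_PP$, $F_1$, $F_2$ and their adjoints. The algebraic content is: starting from $D_PA$, write $A = B^*P + D_PF_1D_P$, then rewrite $D_P B^* P$ using the adjoint of the \emph{other} fundamental equation (i.e. $B^* = P^*A + D_P F_2^* D_P$, obtained by taking adjoints in $B-A^*P=D_PF_2D_P$), and push the resulting $D_P P^*$ to $P^* D_{P^*}$ or $D_P P$ as needed via~(\ref{Maa8})–(\ref{Maa10}). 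After collecting terms one should land exactly on $D_P A = F_1 D_P + F_2^* D_P P$, and the symmetric manipulation with the roles of $F_1,F_2$ (and $A,B$) interchanged gives $D_P B = F_2 D_P + F_1^* D_P P$.

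For the existence half of the statement I would simply note that the fundamental operators $F_1,F_2$ coming from~(\ref{Maa12}) already exist (this is Theorem~3.5 of~\cite{sir's tetrablock paper}, quoted in the introduction), and the computation above shows they satisfy the new pair of equations; so existence is immediate once the identities are checked. The uniqueness half is the point that needs a genuine argument: I would show that the operator $X \mapsto (D_P X)$ type relation pins $F_1$ down. Suppose $F_1',F_2'$ is another pair on $\mathcal{D}_P$ satisfying $D_P A = F_1' D_P + F_2'^* D_P P$ and $D_P B = F_2' D_P + F_1'^* D_P P$. Subtracting the two versions, set $X_1 = F_1 - F_1'$ and $X_2 = F_2-F_2'$; then $X_1 D_P + X_2^* D_P P = 0$ and $X_2 D_P + X_1^* D_P P = 0$ on $\mathcal{D}_P$. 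From here one recovers, by reversing the manipulation, that $D_P X_1 D_P = 0$ and $D_P X_2 D_P = 0$, hence $X_1 = X_2 = 0$ since $X_1,X_2 \in \mathcal{B}(\mathcal{D}_P)$ and $D_P$ has dense range in $\mathcal{D}_P$; alternatively, one invokes directly the uniqueness already established in~(\ref{Maa12}).

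The main obstacle I anticipate is bookkeeping rather than conceptual: one must be careful about which of the two fundamental equations (or its adjoint) to substitute at each stage, and must apply $PD_P=D_{P^*}P$ and $D_PP^*=P^*D_{P^*}$ in the correct direction so that everything stays on the space $\mathcal{D}_P$ and the cross terms in $P^*$ and $P$ cancel cleanly. A secondary subtlety is that the operators $F_1^*, F_2^*$ appear, and $D_P$ need not be injective, so the cancellation step in the uniqueness argument should be phrased in terms of the range of $D_P$ being dense in $\mathcal{D}_P$, not in terms of inverting $D_P$. Once those two points are handled the proof is a short direct verification.
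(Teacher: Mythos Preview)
The paper does not prove this lemma at all; it merely quotes it as Corollary~4.2 of \cite{sir's tetrablock paper}. So there is no in-paper argument to compare against, and your attempt must be judged on its own.

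Your existence computation is sound in outline. Substituting $B^*=P^*A+D_PF_2^*D_P$ (the adjoint of the second fundamental equation) into $A-B^*P=D_PF_1D_P$ and using the commutativity $AP=PA$ yields $D_P^2A=D_P\bigl(F_1D_P+F_2^*D_PP\bigr)$; since $\ker D_P=\mathcal{D}_P^{\perp}$, the operator $D_P$ is injective on $\mathcal{D}_P$ and one may cancel it on the left. Note that this is left-injectivity, not the dense-range fact you highlight; also, the relations (\ref{Maa8})--(\ref{Maa10}) are not really what is used here---the crucial ingredient is $P^*P=I-D_P^2$ together with $AP=PA$.

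Your uniqueness argument, however, has a real gap. ``Reversing the manipulation'' from $X_1D_P+X_2^*D_PP=0$ and $X_2D_P+X_1^*D_PP=0$ does \emph{not} give $D_PX_1D_P=0$; it gives only
\[
D_PX_1D_P \;=\; P^*\bigl(D_PX_1D_P\bigr)P,
\]
which by itself does not force the operator to vanish. The ``alternative'' of invoking uniqueness in (\ref{Maa12}) runs into the same obstruction: deriving (\ref{Maa12}) for $F_1',F_2'$ from the new equations again reduces to an identity of the form $Y=P^*YP$. The missing step is this: writing $W=D_PX_1D_P$, one has $W=P^{*n}WP^n$ for every $n$, and since
\[
\|D_PP^nh\|^2=\|P^nh\|^2-\|P^{n+1}h\|^2\longrightarrow 0
\]
(the sequence $\|P^nh\|$ is nonincreasing, hence Cauchy), it follows that $\|WP^nh\|\le\|X_1\|\,\|D_PP^nh\|\to 0$, whence $Wh=0$ for all $h$. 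With this observation added, your proof is complete.
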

The next three lemmas give relations between the fundamental operators of a tetrablock contraction and its adjoint.
\begin{lemma}\label{tetralem4}
Let (A,B,P) be a tetrablock contraction on a Hilbert space $\mathcal{H}$ and $F_1, F_2$ and $G_1,G_2$ be fundamental operators of $(A,B,P)$ and $(A^*,B^*,P^*)$ respectively. Then
$$
PF_i=G_i^*P|_{\mathcal{D}_P}, \text{ for $i$=$1$ and $2$}.
$$
\end{lemma}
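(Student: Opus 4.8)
The plan is to verify the identity $PF_i = G_i^*P|_{\mathcal{D}_P}$ by testing it against vectors of the form $D_P h$, which span $\mathcal{D}_P$, and reducing everything to the defining equations of the fundamental operators (Lemma \ref{tetra}) together with the commutation relations $PD_P = D_{P^*}P$ and $D_P P^* = P^* D_{P^*}$. Concretely, I would treat the case $i=1$ first; the case $i=2$ is identical after swapping $A \leftrightarrow B$, $F_1 \leftrightarrow F_2$, $G_1 \leftrightarrow G_2$.

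First I would compute $PF_1 D_P h$. From Lemma \ref{tetra} applied to $(A,B,P)$ we have $D_P A = F_1 D_P + F_2^* D_P P$, hence $F_1 D_P h = D_P A h - F_2^* D_P P h$. Multiplying by $P$ on the left and using $PD_P = D_{P^*}P$ gives $PF_1 D_P h = D_{P^*} P A h - P F_2^* D_P P h$. The term $PF_2^*D_P$ I would rewrite by taking adjoints in the second relation of Lemma \ref{tetra}: $D_P B = F_2 D_P + F_1^* D_P P$ gives $B^* D_P = D_P F_2^* + P^* D_P F_1^*$, so $D_P F_2^* = B^* D_P - P^* D_P F_1^*$; combined with $PD_P = D_{P^*}P$ and $D_P P^* = P^* D_{P^*}$ this lets me move $P$ past $F_2^* D_P$ at the cost of terms involving $B^*$, $P^*$ and $F_1^*$. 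Separately, I would compute $G_1^* P D_P h = G_1^* D_{P^*} P h$ and expand $G_1^* D_{P^*}$ using the adjoint form of Lemma \ref{tetra} for the tetrablock contraction $(A^*,B^*,P^*)$, whose fundamental operators are $G_1, G_2$: that yields $D_{P^*} A^* = G_1 D_{P^*} + G_2^* D_{P^*} P^*$, and taking adjoints, $A D_{P^*} = D_{P^*} G_1^* + P D_{P^*} G_2^*$, i.e. $D_{P^*} G_1^* = A D_{P^*} - P D_{P^*} G_2^*$. Applying this to $Ph$ and commuting $D_{P^*}P = PD_P$ back, both $PF_1 D_P h$ and $G_1^* P D_P h$ should reduce to the same expression built out of $A$, $B^*$, $P$, $P^*$ acting on $h$ after using $D_{P^*}G_2^*$ once more via the analogous $(A^*,B^*,P^*)$-relation. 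Equating the two completes the proof on the dense set $\{D_P h\}$, and boundedness of all operators involved extends it to $\mathcal{D}_P$.

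The main obstacle I anticipate is purely bookkeeping: each side naturally produces a sum of several terms, and matching them requires using \emph{both} equations of Lemma \ref{tetra} (for $(A,B,P)$ and for $(A^*,B^*,P^*)$) and \emph{both} commutation relations (\ref{Maa8}), (\ref{Maa10}) in the right order, so it is easy to generate spurious terms that must be shown to cancel. A cleaner route, which I would try if the direct computation gets unwieldy, is to invoke the uniqueness clause of Lemma \ref{tetra}: show that the operator $G_1^*$ restricted appropriately, when conjugated through $P$, satisfies on $\mathcal{D}_P$ the same pair of equations that characterize $F_1$ — but since $P$ need not be invertible this "transport of structure" argument requires care, so the term-by-term verification on $D_P h$ is the safer primary approach.
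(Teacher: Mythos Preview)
Your strategy of testing on vectors $D_Ph$ is right and is what the paper does, but the route you chose through Lemma \ref{tetra} is a detour and contains a concrete slip. You say you will ``expand $G_1^*D_{P^*}$'' using the adjoint form of Lemma \ref{tetra} for $(A^*,B^*,P^*)$, but the formula you actually derive is for $D_{P^*}G_1^*$, not $G_1^*D_{P^*}$; these are different operators (one maps $\mathcal{D}_{P^*}\to\mathcal{H}$, the other $\mathcal{H}\to\mathcal{D}_{P^*}$), so ``applying this to $Ph$'' does not produce $G_1^*D_{P^*}Ph$. (Also, the adjoint of $G_2^*D_{P^*}P^*$ is $PD_{P^*}G_2$, not $PD_{P^*}G_2^*$.) Lemma \ref{tetra} in fact never gives you $G_1^*D_{P^*}$ standing alone.

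The paper sidesteps all of this by using the defining fundamental equations (\ref{Maa12}) and (\ref{Maa13}) directly, rather than Lemma \ref{tetra}, and by pairing against $D_{P^*}h'$. Since both sides of the claimed identity land in $\mathcal{D}_{P^*}$, it suffices to show $\langle (PF_1-G_1^*P)D_Ph,\,D_{P^*}h'\rangle=0$. Using $PD_P=D_{P^*}P$ together with $D_PF_1D_P=A-B^*P$ and $D_{P^*}G_1^*D_{P^*}=A-PB^*$ (the adjoint of (\ref{Maa13})), this inner product collapses to $\langle P(A-B^*P)h-(A-PB^*)Ph,\,h'\rangle=\langle(PA-AP)h,\,h'\rangle=0$ by commutativity of $A$ and $P$. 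No term involving $F_2$ or $G_2$ ever enters, and the whole argument is four lines. Your approach can be repaired by multiplying both sides of the desired identity on the left by $D_{P^*}$ (which is injective on $\mathcal{D}_{P^*}$), but once you do that you will find yourself using $D_PF_1D_P$ and $D_{P^*}G_1^*D_{P^*}$ anyway---i.e., equations (\ref{Maa12}) and (\ref{Maa13})---so you may as well start from them.
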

\begin{proof}
We shall prove only for $i=1$, the proof for $i=2$ is similar. Note that the operators on both sides are from $\mathcal{D}_P$ to $\mathcal{D}_{P^*}$.
Let $h,h' \in \mathcal{H}$ be any element. Then
\begin{eqnarray*}
&&\langle (PF_1-G_1^*P)D_Ph, D_{P^*}h' \rangle
\\
&=&
\langle D_{P^*}PF_1D_Ph,h' \rangle- \langle D_{P^*}G_1^*PD_{P}h,h' \rangle
\\
&=&
\langle P(D_P F_1D_P)h,h' \rangle - \langle (D_{P^*}G_1^*D_{P^*})Ph,h'\rangle
\\
&=&
\langle P(A-B^*P)h,h' \rangle - \langle (A-PB^*)Ph,h' \rangle
\\
&=&
\langle (PA-PB^*P-AP+PB^*P)h,h' \rangle =0.
\end{eqnarray*}
Hence the proof.
\end{proof}
\begin{lemma}\label{tetralem2}
Let $(A,B,P)$ be a tetrablock contraction on a Hilbert space $\mathcal{H}$ and $F_1,F_2$ and $G_1,G_2$ be fundamental operators of $(A,B,P)$ and $(A^*,B^*,P^*)$ respectively. Then
$$
D_PF_1=(AD_P-D_{P^*}G_2P)|{\mathcal{D}_P} \text{ and } D_PF_2=(BD_{P}-D_{P^*}G_1P)|{\mathcal{D_P}}.
$$
\end{lemma}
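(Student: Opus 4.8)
The plan is to verify the claimed identity on the dense subspace $\operatorname{Ran}D_P$ of $\mathcal{D}_P$ and then extend by continuity. Since $D_PF_1$ and $(AD_P-D_{P^*}G_2P)|_{\mathcal{D}_P}$ are both bounded operators on $\mathcal{D}_P$, it is enough to check that $(AD_P-D_{P^*}G_2P)D_Ph = D_PF_1D_Ph$ for every $h\in\mathcal{H}$; density of $\operatorname{Ran}D_P$ in $\mathcal{D}_P$ then finishes the first identity.

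To carry out that verification I would first rewrite the left-hand side using $D_P^2=I-P^*P$ and the commutation relation (\ref{Maa8}): the second term becomes $D_{P^*}G_2PD_Ph = D_{P^*}G_2D_{P^*}Ph$, and the second fundamental equation for the adjoint triple, $B^*-AP^*=D_{P^*}G_2D_{P^*}$ from (\ref{Maa13}), converts this into $(B^*-AP^*)Ph$. The first term is simply $A(I-P^*P)h$. Subtracting, the cross terms $AP^*Ph$ cancel and the left-hand side collapses to $(A-B^*P)h$, which by the first fundamental equation (\ref{Maa12}) equals $D_PF_1D_Ph$. This settles $D_PF_1=(AD_P-D_{P^*}G_2P)|_{\mathcal{D}_P}$; the identity $D_PF_2=(BD_P-D_{P^*}G_1P)|_{\mathcal{D}_P}$ follows by the identical computation with $(F_1,G_2,A-B^*P)$ replaced by $(F_2,G_1,B-A^*P)$ and the roles of $A^*-BP^*=D_{P^*}G_1D_{P^*}$ used instead.

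There is essentially no obstacle; the argument is a short algebraic manipulation built from the two fundamental equations (\ref{Maa12}), (\ref{Maa13}) and the standard relation (\ref{Maa8}). The only points that deserve a word of care are that (\ref{Maa8}) guarantees $P$ maps $\operatorname{Ran}D_P$ into $\operatorname{Ran}D_{P^*}$, so that $G_2\in\mathcal{B}(\mathcal{D}_{P^*})$ may legitimately be applied, and that boundedness of all operators involved together with the density of $\operatorname{Ran}D_P$ in $\mathcal{D}_P$ is what permits passing from the computation on ranges to the stated restriction to $\mathcal{D}_P$.
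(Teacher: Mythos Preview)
Your argument is correct and is essentially identical to the paper's own proof: both apply the identity to $D_Ph$, rewrite $D_{P^*}G_2PD_P$ as $(B^*-AP^*)P$ via $PD_P=D_{P^*}P$ and (\ref{Maa13}), expand $AD_P^2=A(I-P^*P)$, and cancel to obtain $(A-B^*P)h=D_PF_1D_Ph$.
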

\begin{proof}
We shall prove only one of the above, proof of the other is similar. For $h \in \mathcal{H}$, we have
\begin{eqnarray*}
(AD_P-D_{P^*}G_2P)D_Ph
&=&
A(I-P^*P)h-(D_{P^*}G_2D_{P^*})Ph
\\
&=&
Ah-AP^*Ph-(B^*-AP^*)Ph
\\
&=&
Ah-AP^*Ph-B^*Ph+AP^*Ph
\\
&=&(A-B^*P)h=(D_P F_1)D_Ph.
\end{eqnarray*}
Hence the proof.
\end{proof}
\begin{lemma}\label{tetralem3}
Let $(A,B,P)$ be a tetrablock contraction on a Hilbert space $\mathcal{H}$ and $F_1,F_2$ and $G_1,G_2$ be fundamental operators of $(A,B,P)$ and $(A^*,B^*,P^*)$ respectively. Then
\begin{eqnarray*}
&&(F_1^*D_PD_{P^*}-F_2P^*)|_{\mathcal{D}_{P^*}}=D_PD_{P^*}G_1-P^*G_2^* \text{ and }
\\
&&(F_2^*D_PD_{P^*}-F_1P^*)|_{\mathcal{D}_{P^*}}=D_PD_{P^*}G_2-P^*G_1^*.
\end{eqnarray*}
\end{lemma}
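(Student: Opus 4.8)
The plan is to verify the first identity by pairing both sides against the dense sets $\{D_{P^*}h:h\in\mathcal{H}\}$ and $\{D_Ph':h'\in\mathcal{H}\}$ and reducing each side to the single bilinear form $\langle h,(A-B^*P-PB^*+PB^*P^*P)h'\rangle$; the second identity then follows by symmetry. First I would note that $(F_1^*D_PD_{P^*}-F_2P^*)|_{\mathcal{D}_{P^*}}$ and $D_PD_{P^*}G_1-P^*G_2^*$ are both bounded operators from $\mathcal{D}_{P^*}$ into $\mathcal{D}_P$ (that $P^*$ maps $\mathcal{D}_{P^*}$ into $\mathcal{D}_P$ is immediate from (\ref{Maa10})), so it suffices to show $\langle(F_1^*D_PD_{P^*}-F_2P^*)D_{P^*}h,D_Ph'\rangle=\langle(D_PD_{P^*}G_1-P^*G_2^*)D_{P^*}h,D_Ph'\rangle$ for all $h,h'\in\mathcal{H}$.

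For the left-hand side I would handle the two summands separately. Moving $D_P$ across the inner product and using the definition of the adjoint, the first summand $\langle F_1^*D_PD_{P^*}^2h,D_Ph'\rangle$ becomes $\langle D_{P^*}^2h,D_PF_1D_Ph'\rangle$, which by (\ref{Maa12}) and $D_{P^*}^2=I-PP^*$ equals $\langle(I-PP^*)h,(A-B^*P)h'\rangle$. In the term $\langle F_2P^*D_{P^*}h,D_Ph'\rangle$ I would first replace $P^*D_{P^*}$ by $D_PP^*$ via (\ref{Maa10}), then substitute $F_2D_P=D_PB-F_1^*D_PP$ from Lemma~\ref{tetra}, and simplify the two resulting terms using $D_P^2=I-P^*P$ and again $D_PF_1D_P=A-B^*P$; the outcome is $\langle BP^*h,(I-P^*P)h'\rangle-\langle PP^*h,(A-B^*P)h'\rangle$. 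Subtracting, the $PP^*$ contributions cancel and the left-hand side reduces to $\langle h,(A-B^*P)h'\rangle-\langle BP^*h,(I-P^*P)h'\rangle=\langle h,(A-B^*P-PB^*+PB^*P^*P)h'\rangle$.

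For the right-hand side I would again split into two summands. For $\langle D_PD_{P^*}G_1D_{P^*}h,D_Ph'\rangle$ I would use $D_{P^*}G_1D_{P^*}=A^*-BP^*$ from (\ref{Maa13}) together with $D_P^2=I-P^*P$; for $\langle P^*G_2^*D_{P^*}h,D_Ph'\rangle$ I would transport a factor of $P$ across via (\ref{Maa8}) and then use $D_{P^*}G_2^*D_{P^*}=(D_{P^*}G_2D_{P^*})^*=(B^*-AP^*)^*=B-PA^*$. Expanding $(A-PB^*)(I-P^*P)-(B^*-AP^*)P$ the terms in $AP^*P$ cancel and the right-hand side collapses to $\langle h,(A-B^*P-PB^*+PB^*P^*P)h'\rangle$, which matches the left-hand side; this proves the first identity. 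The second identity follows by applying the first to the tetrablock contraction $(B,A,P)$, whose fundamental operators are $(F_2,F_1)$ and whose adjoint has fundamental operators $(G_2,G_1)$ thanks to the symmetry of $\overline{E}$ under interchanging its first two coordinates (or, equally well, by rerunning the computation with $F_1,F_2$ and $G_1,G_2$ interchanged and the second equation of (\ref{Maa12}) in place of the first). There is no deep step here; the main hazard is bookkeeping --- keeping straight which operators act on $\mathcal{H}$, on $\mathcal{D}_P$, and on $\mathcal{D}_{P^*}$, and invoking the defect relations $PD_P=D_{P^*}P$ and $D_PP^*=P^*D_{P^*}$ precisely at the steps where a factor has to be carried between $\mathcal{D}_P$ and $\mathcal{D}_{P^*}$.
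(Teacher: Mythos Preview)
Your proof is correct. Both approaches are straightforward bookkeeping with the fundamental equations, but they differ in presentation. The paper works directly at the operator level: it applies the left-hand side to $D_{P^*}h$, uses Lemma~\ref{tetra} to collapse $F_1^*D_PP+F_2D_P$ into $D_PB$, and then invokes Lemma~\ref{tetralem2} (in the adjoint form $F_1^*D_P=D_PA^*-P^*G_2^*D_{P^*}$) to reach the right-hand side in one stroke. You instead pair both sides against $D_Ph'$ and reduce each to the common scalar expression $\langle h,(A-B^*P-PB^*+PB^*P^*P)h'\rangle$, thereby avoiding Lemma~\ref{tetralem2} altogether and relying only on (\ref{Maa12}), (\ref{Maa13}), Lemma~\ref{tetra}, and the intertwining relations (\ref{Maa8})--(\ref{Maa10}). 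Your route is slightly longer but more self-contained; the paper's route is shorter because it cashes in the earlier lemma. Either way the argument is purely mechanical.
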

\begin{proof}
For $h \in \mathcal{H}$, we have
\begin{eqnarray*}
(F_1^*D_PD_{P^*}-F_2P^*)D_{P^*}h
&=&
F_1^*D_P(I-PP^*)h-F_2P^*D_{P^*}h
\\
&=&
F_1^*D_Ph-F_1^*D_PPP^*h-F_2D_PP^*h
\\
&=&
F_1^*D_Ph-(F_1^*D_PP+F_2D_P)P^*h
\\
&=&
F_1^*D_Ph-D_PBP^*h\;\;\;\;\;\;\;\;\;\;\;\;[\text{by Lemma \ref{tetra}}]
\\
&=&
(AD_P-D_{P^*}G_2P)^*h-D_PBP^*h \;\;\;\;[\text{by Lemma \ref{tetralem2}}]
\\
&=&
D_PA^*h-P^*G_2^*D_{P^*}h-D_PBP^*h
\\
&=&
D_P(A^*-BP^*)h-P^*G_2^*D_{P^*}h
\\
&=&
D_PD_{P^*}G_1D_{P^*}h-P^*G_2^*D_{P^*}h
\\
&=&
(D_PD_{P^*}G_1-P^*G_2^*)D_{P^*}h.
\end{eqnarray*}
The other relation can be proved similarly.
\end{proof}


{\bf{Observation}}
If $(N_1,N_2,N_3)$ is a commuting triple of bounded operators such that $N_3$ is unitary and $N_1=N_2^*N_3$, then $N_1$ and $N_2$ are normal operators.
\begin{proof}
$N_1=N_2^*N_3$ gives after multiplying $N_3^*$ from left $N_1N_3^*=N_2^*$, which after taking adjoint each side gives $N_2=N_3N_1^*=N_1^*N_3$, where last equality follows from Fuglede-Putnam's theorem(See \cite{conway functional analysis}).
$$ N_1N_1^*N_3=N_1N_2=N_2N_1=N_1^*N_3N_1=N_1^*N_1N_3. $$
Since $N_3$ is unitary, it follows that $N_1$ is a normal operator.
Also $$N_2N_2^*N_3=N_2N_1=N_1N_2=N_2^*N_3N_2=N_2^*N_2N_3.$$
Since $N_3$ is unitary, it follows that $N_2$ is normal operator.
\end{proof}

\section{Dilation Of A Tetrablock Contraction - Proof of Theorem \ref{1st major thm}}
We begin this section by showing that the fundamental operators $F_1$ and $F_2$ of a tetrablock contraction $(A,B,P)$ and the fundamental operators $G_1$ and $G_2$ of the adjoint tetrablock contraction $(A^*,B^*,P^*)$ are intimately related in the sense explained in the following lemma.
\begin{lemma}\label{brandnew}
Let $(A,B,P)$ be a tetrablock contraction on a Hilbert space $\mathcal{H}$ and $F_1,F_2$ and $G_1,G_2$ be fundamental operators of $(A,B,P)$ and $(A^*,B^*,P^*)$ respectively. Then
$$
[F_1, F_2] =0 \text{ and } [F_1, F_1^*] = [F_2, F_2^* ]$$ $$\text{if and only if}$$
$$ [G_1, G_2] =0 \text{ and } [G_1, G_1^*] = [G_2, G_2^* ].$$
\end{lemma}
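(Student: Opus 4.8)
The proof plan rests on the three structural lemmas just proved---Lemmas \ref{tetralem4}, \ref{tetralem2}, and \ref{tetralem3}---which collectively express the $G_i$ in terms of the $F_i$ (compressed and conjugated by $P,P^*,D_P,D_{P^*}$). Since the statement is symmetric in $(F_1,F_2)$ versus $(G_1,G_2)$---indeed the fundamental operators of the adjoint of $(A^*,B^*,P^*)$ are again $(F_1,F_2)$---it suffices to prove one direction of the equivalence, say that the $F$-relations \eqref{Maa14} imply the corresponding $G$-relations; the converse then follows by applying the same implication to the tetrablock contraction $(A^*,B^*,P^*)$.

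The key idea is to translate the commutator identities $[G_1,G_2]=0$ and $[G_1,G_1^*]=[G_2,G_2^*]$, which live on $\mathcal{D}_{P^*}$, into identities on $\mathcal{D}_P$ by sandwiching with $D_{P^*}$ on both sides and using the intertwining relations. First I would compute $D_{P^*}G_1G_2 D_{P^*}$ by inserting $D_{P^*}^2 = I - PP^*$ (or rather by writing $G_1D_{P^*}$ and then $D_{P^*}G_2$ and using $D_{P^*}G_iD_{P^*} = $ the appropriate fundamental-equation expression \eqref{Maa13}). Using Lemma \ref{tetralem2} to replace $D_PF_i$ by $(AD_P - D_{P^*}G_{3-i}P)|_{\mathcal{D}_P}$, or more systematically Lemma \ref{tetralem4} to pull $P$ past $F_i$ and $G_i^*$, one rewrites each product $G_iG_j$ and $G_iG_j^*$ so that it is conjugate---via $P$, $P^*$, $D_P$---to the corresponding product of $F$'s. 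The hoped-for outcome of these manipulations is a clean identity of the shape: for all $h,h'\in\mathcal H$,
\begin{equation*}
\langle [G_1,G_2]D_{P^*}h, D_{P^*}h'\rangle = \langle [F_1,F_2]\,(\text{something})\,h, (\text{something})\,h'\rangle,
\end{equation*}
and likewise $\langle([G_1,G_1^*]-[G_2,G_2^*])D_{P^*}h,D_{P^*}h'\rangle$ should come out equal to a matching bilinear form built from $[F_1,F_1^*]-[F_2,F_2^*]$. Since vectors of the form $D_{P^*}h$ are dense in $\mathcal{D}_{P^*}$, vanishing of the right-hand sides (which is exactly hypothesis \eqref{Maa14}) forces vanishing of the left-hand sides.

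I expect the main obstacle to be bookkeeping: the commutator $[G_1,G_2]$ expands into four terms, and each must be carried through Lemmas \ref{tetralem4}--\ref{tetralem3} without the conjugating factors $P,P^*,D_P,D_{P^*}$ getting ``stuck'' in the wrong order---the relations $PD_P=D_{P^*}P$, $D_PP^*=P^*D_{P^*}$ and the Fuglede--Putnam-type identity $PF_i=G_i^*P|_{\mathcal D_P}$ of Lemma \ref{tetralem4} are exactly what let one slide these factors past, but one has to check the cancellations actually happen. A secondary subtlety is the second relation $[F_1,F_1^*]=[F_2,F_2^*]$: here one cannot treat $[F_1,F_1^*]$ and $[F_2,F_2^*]$ separately (neither need vanish), so the algebra must be arranged to handle their \emph{difference} as a single object, presumably using Lemma \ref{tetralem3} which already mixes $F_1^*$ with $G_1$ and $F_2$ with $G_2^*$ in a way that should produce the difference of the two commutators symmetrically. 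Once the two target bilinear identities are established, density of $\{D_{P^*}h\}$ in $\mathcal D_{P^*}$ closes the argument, and the reverse implication is free by the adjoint symmetry noted above.
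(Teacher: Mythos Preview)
Your symmetry reduction is correct, and matches the paper's first sentence: it suffices to prove one implication. But the proposed mechanism---deriving the $G$-relations from the $F$-relations purely by pushing the intertwining identities of Lemmas~\ref{tetralem4}--\ref{tetralem3} around---has a structural gap, not just a bookkeeping one. Consider the case where $P$ is an isometry. Then $D_P=0$, $\mathcal{D}_P=\{0\}$, and $F_1=F_2=0$, so hypothesis~\eqref{Maa14} holds vacuously and all three lemmas you plan to use become empty statements (each involves $D_P$ or $F_i$ on the relevant side). Yet $\mathcal{D}_{P^*}$ is generally nonzero and the conclusion $[G_1,G_2]=0$, $[G_1,G_1^*]=[G_2,G_2^*]$ is a nontrivial assertion about the tetrablock co-isometry $(A^*,B^*,P^*)$. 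No bilinear-form identity of the shape you hope for can manufacture this from $0=0$; the information simply is not encoded in the intertwiners. If you try to run your computation directly, e.g.\ expanding $G_1G_2D_{P^*}$ via Lemma~\ref{tetra} for $(A^*,B^*,P^*)$, the cross-terms involve $D_{P^*}P^*$ (which is \emph{not} $P^*D_P$) and the expression does not collapse to something controlled by $[F_1,F_2]$ alone.

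The paper takes a genuinely different route that supplies exactly this missing ingredient. Assuming the $F$-relations, Theorem~6.1 of \cite{sir's tetrablock paper} yields a tetrablock isometric dilation $(V_1,V_2,V_3)$ of $(A,B,P)$ with $V_3$ the minimal isometric dilation of $P$. One then builds a unitary $X:\mathcal{D}_{P^*}\to\mathcal{D}_{V_3^*}$ and checks that $XG_1X^*$, $XG_2X^*$ are the fundamental operators of $(V_1^*,V_2^*,V_3^*)$. The point is that the $G$-relations are now reduced to a statement about a tetrablock \emph{isometry}, and here an external structural theorem kicks in: after Wold-decomposing $(V_1,V_2,V_3)$, the pure part is unitarily equivalent to a model triple $(M_{\tilde G_1^*+\tilde G_2 z},\,M_{\tilde G_2^*+\tilde G_1 z},\,M_z)$ (Corollary~15 of \cite{sau}), and commutativity of this model forces the desired relations on $\tilde G_1,\tilde G_2$, hence on $G_1,G_2$. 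The dilation/model-theory step is not decorative---it is what carries the argument through the isometry case that your direct approach cannot reach.
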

\begin{proof}
Note that if we can show one of the above implications, then the other will follow from it. Because once we prove one implication, we then apply it for the tetrablock contraction $(A^*,B^*,P^*)$ to get the other implication. So we shall prove only one implication.

We first show that if $V$ on a Hilbert space $\tilde{\mathcal{H}}$ is the minimal isometric dilation of a contraction $P$ on Hilbert space $\mathcal{H}$, then dimensions of the spaces $\mathcal{D}_{V^*}$ and $\mathcal{D}_{P^*}$ are the same. Since $V$ is minimal, $\tilde{\mathcal{H}}=\bar{span}\{V^nh: h \in \mathcal{H}, n \geq 0 \}$. We have for all $h,h' \in \mathcal{H}$ and $n \geq 0$, $\langle V^*h, V^nh'\rangle= \langle h, V^{n+1}h' \rangle=\langle h, P^{n+1}h' \rangle=\langle P^*h, P^nh' \rangle = \langle P^*h, V^nh' \rangle$. Hence $V^*|_{\mathcal{H}}=P^*$. Note that $D_{V^*}^2V^nh=(I-VV^*)V^nh=0,$ for all $n \geq 1$. So the operator $D_{V^*}^2$ kills $\{V^nh, h \in \mathcal{H} \mbox{ and }n \geq 1\}$, so does the operator $D_{V^*}$. Therefore $\mathcal{D}_{V^*} = \bar{D_{V^*}\tilde{\mathcal{H}}} = \bar{D_{V^*}\mathcal{H}}$. Something more is true, $\lVert D_{V^*}h\rVert^2 = \langle (I-VV^*)h,h\rangle = \lVert h\rVert^2 - \lVert V^*h\rVert^2 = \lVert h\rVert^2 - \lVert P^*h\rVert^2 = \lVert D_{P^*}h\rVert^2$. So we define a unitary $X:\mathcal{D}_{P^*} \to \mathcal{D}_{V^*}$ by $XD_{P^*}h = D_{V^*}h$, for all $h \in \mathcal{H}$ and extend it to the closure continuously. That $X$ is a unitary, is clear from its very definition and from the fact that $\lVert D_{V^*}h\rVert=\lVert D_{P^*}h\rVert$, for all $h \in \mathcal{H}$. Note that the unitary $X$ satisfies $XD_{P^*}=D_{V^*}$, whenever $V$ is the minimal isometric dilation of $P$.

Let us suppose that the fundamental operators $F_1$ and $F_2$ of the tetrablock contraction $(A,B,P)$ satisfy $[F_1, F_2] =0 \text{ and } [F_1, F_1^*] = [F_2, F_2^* ]$. Then we know from \cite{sir's tetrablock paper}(Theorem 6.1) that $(A,B,P)$ has a tetrablock isometric dilation $(V_1,V_2,V_3)$ such that $V_3$ is the minimal isometric dilation of $P$ and in fact, $(V_1,V_2,V_3)$ is a co-extension of $(A,B,P)$. We shall prove that $[G_1, G_2] =0 \text{ and } [G_1, G_1^*] = [G_2, G_2^* ]$, where $G_1$ and $G_2$ are fundamental operators of $(A^*,B^*,P^*)$.

Now we show that $XG_1X^*$ and $XG_2X^*$ are the fundamental operators of the tetrablock co-isometry $(V_1^*,V_2^*,V_3^*)$, where $X$ is the unitary defined above. For $h,h' \in \mathcal{H}$ we have
\begin{eqnarray*}
&&\langle (A^*-BP^*)h,h' \rangle = \langle D_{P^*}G_1D_{P^*}h,h' \rangle
\\
&\Rightarrow& \langle A^*h,h' \rangle - \langle P^*h,B^*h' \rangle = \langle G_1D_{P^*}h,D_{P^*}h' \rangle
\\
&\Rightarrow&
\langle V_1^*h,h' \rangle - \langle V_3^*h,V_2^*h' \rangle = \langle G_1X^* D_{V_3^*}h,X^* D_{V_3^*}h' \rangle
\\
&\Rightarrow&
\langle (V_1^*-V_2V_3^*)h,h' \rangle = \langle D_{V_3^*}X G_1X^*D_{V_3^*}h,h' \rangle.
\end{eqnarray*}
Therefore $(V_1^*-V_2V_3^*) = D_{V_3^*}(X G_1X^*)D_{V_3^*}$. Similarly it can be showed that $(V_2^*-V_1V_3^*) = D_{V_3^*}(X G_2X^*)D_{V_3^*}$. By uniqueness of the fundamental operators, $XG_1X^*$ and $XG_2X^*$ are the fundamental operators of $(V_1^*,V_2^*,V_3^*)$.

Since $(V_1,V_2,V_3)$ is a tetrablock isometry on the Hilbert space $\mathcal{K}$, the space $\mathcal{K}$ can be decomposed into $\mathcal{K}_1 \oplus \mathcal{K}_2$ in such a way that $\mathcal{K}_1$ and $\mathcal{K}_2$ reduce $(V_1,V_2,V_3)$ and $(V_1,V_2,V_3)|_{\mathcal{K}_1} =: (\tilde{V_1},\tilde{V_2},\tilde{V_3})$ is pure tetrablock isometry and $(V_1,V_2,V_3)|_{\mathcal{K}_2}$ is tetrablock unitary. See \cite{sir's tetrablock paper}(Theorem 5.6) for a proof of this fact. Note that if the fundamental operators of the pure tetrablock co-isometry $(\tilde{V_1^*},\tilde{V_2^*},\tilde{V_3^*})$ are $\tilde{G_1}$ and $\tilde{G_2}$, then the fundamental operators of the tetrablock co-isometry $(V_1^*,V_2^*,V_3^*)$ are $ 0 \oplus \tilde{G_1}$ and $0 \oplus \tilde{G_2}$.

By Corollary 15 of \cite{sau}, we have that the pure tetrablock isometry $(\tilde{V_1},\tilde{V_2},\tilde{V_3})$ is unitarily equivalent to $(M_{\tilde{G_1}^* + \tilde{G_2}z}, M_{\tilde{G_2}^* + \tilde{G_1}z}, M_z)$, which acts on the Hilbert space $H^2(\mathcal{D}_{\tilde{V_3}^*})$. By commutativity of the triple $(M_{\tilde{G_1}^* + \tilde{G_2}z}, M_{\tilde{G_2}^* + \tilde{G_1}z}, M_z)$ we get
$$
[\tilde{G}_1, \tilde{G}_2] =0 \text{ and } [\tilde{G}_1, \tilde{G}_1^*] = [\tilde{G}_2, \tilde{G}_2^* ].
$$
Hence the fundamental operators of the tetrablock co-isometry $(V_1^*,V_2^*,V_3^*)$, which are nothing but $ 0 \oplus \tilde{G_1}$ and $0 \oplus \tilde{G_2}$ also satisfy the above equality. But the fundamental operators of the tetrablock co-isometry $(V_1^*,V_2^*,V_3^*)$, as observed above, are $XG_1X^*$ and $XG_2X^*$, where recall that $X$ is a unitary. Hence $G_1$ and $G_2$ also satisfy the above equality. In other words $G_1$ and $G_2$ satisfy
$$
[G_1, G_2] =0 \text{ and } [G_1, G_1^*] = [G_2, G_2^* ].$$
Hence the proof.
\end{proof}

We now prove that $(R_1,R_2,U)$ on $\mathcal{K}$ defined in the statement of Theorem \ref{1st major thm} is a tetrablock unitary. To prove that, we shall show the following
\begin{enumerate}
\item[(i)] $R_1,R_2$ and $U$ commute with each other,
\item[(ii)] $R_1=R_2^*U$ and
\item[(iii)] $R_2$ is a contraction.
\end{enumerate}
These will imply that $(R_1,R_2,U)$ is a tetrablock unitary by
part (2) of Theorem \ref{tetrathm1}.

\underline{\bf{Proof of part (i)}}
It can be easily checked that the operators $D_1,D_2$ and $D_3$ on $l^2(\mathcal{D}_{P^*})$ defined in Theorem \ref{1st major thm} are unitarily equivalent to the operators $M^*_{G_1+G_2^*z}, M^*_{G_2+G_1^*z}$ and $M_z^*$ on $H^2_{\mathcal{D}_{P^*}}(\mathbb{D})$ respectively.
To show that $ R_1= \left(
     \begin{array}{cc}
       V_1 & C_1 \\
       0 & D_1 \\
     \end{array}
   \right)
$ and
$
R_2=
\left(
              \begin{array}{cc}
                V_2 & C_2 \\
                0 & D_2 \\
              \end{array}
            \right)
$ commute, we shall have to show
$V_1V_2=V_2V_1,$ $D_1D_2=D_2D_1$ and $V_1C_2+C_1D_2=V_2C_1+C_2D_1$.
\\
That $V_1$ and $V_2$ commute follows from the fact that $(V_1,V_2,V_3)$ is a tetrablock isometry.
As we have observed, the commutativity of $D_1$ and $D_2$ is equivalent to that of $M_{G_2+G_1^*z}$ and $M_{G_1+G_2^*z}$. It can be easily checked that the commutativity of $M_{G_2+G_1^*z}$ and $M_{G_1+G_2^*z}$ is equivalent to $G_1, G_2$ satisfying equation (\ref{Maa14}) in place of $F_1$ and $F_2$ respectively, which holds true. So we have $D_1D_2=D_2D_1.$
From the definition of the operators, it can be easily checked that
\begin{eqnarray}\label{app}V_1C_2+C_1D_2=V_2C_1+C_2D_1.\end{eqnarray}
The detailed proof of this can be found in the Appendix.

\underline{\bf{Proof of part (ii)}}
To prove $R_1=R_2^*U$ it is equivalent to show the following:
$$
V_1=V_2^*V_3,\;C_1=V_2^*C_3,\;C_2^*V_3=0 \text{ and } D_1=C_2^*C_3+D_2^*D_3
$$
We shall check these conditions one by one.
Since $(V_1,V_2,V_3)$ is a tetrablock isometry, the first condition is satisfied.
\begin{eqnarray*}
V_2^*C_3(a_0,a_1,a_2,\dots)
&=&
V_2^*(D_{P^*}a_0 \oplus (-P^*a_0,0,0,\dots))
\\
&=&
(B^*D_{P^*}-D_P F_1P^*)a_0\oplus(-F_2^*P^*a_0,0,0,\dots)
\\
&=&C_1(a_0,a_1,a_2,\dots) \; [\text{using Lemma \ref{tetralem2}}].
\end{eqnarray*}
\begin{eqnarray*}
C_2^*V_3(h \oplus (a_0,a_1,a_2,\dots))
&=&
C_2^*(Ph \oplus (D_Ph, a_0, a_1, \dots ))
\\
&=&
((G_1^*D_{P^*}P-PF_1D_P)a_0,0,0,\dots)
\\
&=&(0,0,0,\dots) \; [\text{using Lemma \ref{tetralem4}}].
\end{eqnarray*}
\begin{eqnarray*} \text{ Now }
&&(C_2^*C_3+D_2^*D_3)(a_0,a_1,a_2,\dots)
\\
&=&
C_2^*(D_{P^*}a_0 \oplus (-P^*a_0,0,0,\dots)) + D_2^*(a_1,a_2,a_3,\dots)
\\
&=&
((G_1^*D_{P^*}^2+PF_1P^*)a_0,0,0,\dots) + (G_2a_1,G_1^*a_1+G_2a_2,G_1^*a_2+G_2a_3,\dots)
\\
&=&
((G_1^*-G_1^*PP^*+PF_1P^*)a_0+G_2a_1,G_1^*a_1+G_2a_2,G_1^*a_2+G_2a_3,\dots)
\\
&=&
(G_1^*a_0+G_2a_1,G_1^*a_1+G_2a_2,G_1^*a_2+G_2a_3,\dots)
\\
&=&D_1(a_0,a_1,a_2,\dots)\;[\text{using Lemma \ref{tetralem4}}].
\end{eqnarray*}
So this completes the proof of part (ii).

\underline{\bf{Proof of part (iii)}}
First we shall show that $r(D_2) \leq 1$. What we actually show is that, numerical radius of $D_2$ is not greater than one. Since spectral radius is not greater than the numerical radius (See \cite{Rosenthal},Theorem 1.2.11. ),
we shall be done.
Let us define
$$
\varphi :\mathbb{D} \to  \mathcal{B}(\mathcal{D}_{P^*}) \text{ by }
\varphi(z)=G_2 + zG_1^*.
$$
Clearly $\varphi$ is holomorphic, bounded and continuous on the boundary $\partial\mathbb{D} = \mathbb{T}$ of the disk.
As observed before in the introduction, the operator $D_2^*$ goes to multiplication by the function $\varphi$.
Now $w(M_{\varphi}) \leq  \text{sup}\{w(\varphi(z)) : z \in \mathbb{T}\}$. Recall that the numerical radius of an operator $X$ is
not greater than one if and only if the real part of the operator zX is not bigger than
identity for every $z$ on the unit circle, see \cite{numerical radius}.
From Theorem 3.5 of \cite{sir's tetrablock paper}, we know that $w(G_1+zG_2)\leq 1$, which in turn gives that $w(z_1G_1+z_2G_2)\leq 1$, for all $z_1,z_2$ in the unit circle. Which is equivalent to $(z_1G_1+z_2G_2)+(z_1G_1+z_2G_2)^* \leq 2I$, which gives after rearranging $(z_2G_2+\bar{z}_1G_1^*)+(z_2G_2+\bar{z}_1G_1^*)^* \leq 2I$,
which is equivalent to saying that $z_2(G_2+zG_1^*)+\bar{z}_2(G_2+zG_1^*)^* \leq 2I$, for every $z$ and $z_2$ on the unit circle. Which implies that $w(G_2+zG_1^*) \leq 1$, for al $z$ in the unit circle. Hence $w(M_{\varphi})\leq 1$. This implies that $r(D_2)=r(M_{\varphi}) \leq w(M_{\varphi}) \leq 1$.
Since $R_2$ of the form
$
\begin{pmatrix}
V_2 & C_2 \\
0 & D_2
\end{pmatrix}$, we have by Lemma 1 of \cite{Perturbation of spectrum}, that $\sigma(R_2)\subseteq \sigma(V_2)\cup \sigma (D_2)$. Since $r(V_2) \leq 1$ ($(V_1,V_2,V_3)$ being tetrablock isometry) and $r(D_2) \leq 1$, we have $r(R_2) \leq 1$. Since $R_2$ is a normal operator(applying the observation in previous section to the triple $(R_1,R_2,U)$), we have by Stampfli's theorem (which says that, if $X$ is hyponormal, then $||X^n||=||X||^n$ and so $||X||=r(X)$, see Proposition 4.9 of \cite{stampfli}) that $||R_2|| \leq 1$. This completes the proof of part (iii).

Therefore $(R_1,R_2,U)$ is a tetrablock unitary. Now to prove
$(R_1,R_2,U)$ is a dilation of $(A,B,P)$, we observe that each of
$R_1,R_2$ and $U$ are upper triangular with respect to the
decomposition $\tilde{\mathcal{H}} \oplus l^2(\mathcal{D}_{P^*})$
of $\mathcal{K}$ with $V_1,V_2$ and $V_3$ in the $(11)$-th places of
the operator matrices respectively. Also noting that each of
$V_1,V_2$ and $V_3$ are lower triangular with respect to the
decomposition $\mathcal{H} \oplus l^2(\mathcal{D}_P)$ of
$\tilde{\mathcal{H}}$ with $A,B$ and $P$ in the $(11)$-th places of
the operator matrices respectively, we get
$$A^mB^nP^l=P_\mathcal{H}R_1^mR_2^nU^l|_\mathcal{H},\;\text{for all $m,n,l \geq 0.$}$$
This completes the proof of Theorem \ref{1st major thm}. \qed

\begin{remark}[Minimality]
Minimality of a commuting normal boundary dilation $\underline{N} = (N_1, N_2, \ldots , N_d)$ on a space $\clk$ of a commuting tuple $(T_1, T_2, \ldots ,T_d)$ of bounded operators on a space $\clh$ means that the space $\clk$ is no bigger than
$$\overline{span} \{ N_1^{k_1} N_2^{k_2} \ldots N_d^{k_d} N_1^{*l_1} N_2^{*l_2} \ldots N_d^{*l_d} h : h \in \clh \mbox{ where } k_i \mbox{ and } l_i \in \mathbb{N} \mbox{ for } i=1,2,\ldots, d \}.$$ In our construction, the space is the minimal unitary dilation space of the contraction $P$, which is obviously a subspace of $\overline{span} \{ R_1^{k_1}R_2^{k_2}{R^*}_1^{l_1}{R^*}_2^{l_2}U^nh: h \in \mathcal{H}, k_i, l_i \geq 0 \text{ and } n \in \mathbb{Z}$\}. Note that no dilation of $(A,B,P)$ can take place on a space smaller than the minimal unitary dilation space of the contraction $P$, since the dilation has to dilate $P$ also. Hence the dilation is minimal.
\end{remark}

\section{Uniqueness of the dilation}
In this section, we show that the minimal dilation $(R_1,R_2,U)$ of $(A,B,P)$ defined in Theorem \ref{1st major thm} is unique under a suitable condition.

Note that the operator $U$, when written with respect to the decomposition $l^2(\mathcal{D}_P) \oplus \mathcal{H} \oplus l^2(\mathcal{D}_{P^*})$, takes the following form,
$$
\begin{pmatrix}
U_1 & U_2 & U_3 \\
0   &  P  & U_4 \\
0   &  0  & U_5 \\
\end{pmatrix},
$$
where $U_1, U_2, U_3, U_4$ and $U_5$ are as follows
\begin{eqnarray*}
&&U_1(a_0,a_1,a_2,\dots)=(0,a_0,a_1,\dots),\; U_2(h)=(D_ph,0,0,\dots)
\\
&&U_3(b_0,b_0,b_2,\dots)=(-P^*b_0,0,0,\dots),\; U_4(b_0,b_1,b_2,\dots)=D_{P^*}b_0 \text{ and}
\\
&&U_5(b_0,b_1,b_2,\dots)=(b_1,b_2,b_3,\dots),
\end{eqnarray*}
for all $h \in \mathcal{H}, (a_0,a_1,a_2,\dots) \in l^2(\mathcal{D}_P)$ and $(b_0,b_0,b_2,\dots) \in l^2(\mathcal{D}_{P^*})$. Note that this is the Sch$\ddot{\text{a}}$ffer minimal unitary dilation of the contraction $P$(See \cite{sfr} or ch. 1, sec. 5 in \cite{Nagy-Foias}).

The next result says that if $(R_1,R_2,\dots,R_{n-1},U)$ is a dilation of $(S_1,S_2,\dots,S_{n-1},P)$, where $P$ is a contraction and $U$ is the Sch$\ddot{\text{a}}$ffer minimal unitary dilation of $P$, then the operators $R_j$, $j=1,2,\dots, n-1$ can not be of arbitrary form.
\begin{lemma}\label{added lemma}
Let $(R_1,R_2,\dots,R_{n-1},U)$ on $\mathcal{K}$ be a dilation of $(S_1,S_2,\dots,S_{n-1},P)$ on $\mathcal{H}$, where $P$ is a contraction on $\mathcal{H}$ and $U$ on $\mathcal{K}$ is the Sch$\ddot{\text{a}}$ffer minimal unitary dilation of $P$. Then for all $j=1,2,\dots , n-1$, $R_j$ admits a matrix representation of the form
$$
\begin{pmatrix}
* & * & * \\
0 & S_j & * \\
0 & 0 & * \\
\end{pmatrix},
$$
with respect to the decomposition $\mathcal{K}=l^2(\mathcal{D}_P) \oplus \mathcal{H} \oplus l^2(\mathcal{D}_{P^*})$.
\end{lemma}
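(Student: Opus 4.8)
The plan is to determine the entries of the $3\times3$ operator matrix of $R_j$ with respect to $\mathcal{K}=l^2(\mathcal{D}_P)\oplus\mathcal{H}\oplus l^2(\mathcal{D}_{P^*})$, writing $[R_j]_{pq}$ for the $(p,q)$-entry, using only two inputs: the dilation identities $S_1^{k_1}\cdots S_{n-1}^{k_{n-1}}P^{l}=P_{\mathcal{H}}R_1^{k_1}\cdots R_{n-1}^{k_{n-1}}U^{l}|_{\mathcal{H}}$, and the commutation $R_jU=UR_j$ read off against the displayed matrix of $U$ (whose entries we use are the forward shift $U_1$ on $l^2(\mathcal{D}_P)$, the backward shift $U_5$ on $l^2(\mathcal{D}_{P^*})$, the map $U_2h=(D_Ph,0,0,\dots)$, and $U_4(b_0,b_1,\dots)=D_{P^*}b_0$). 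The goal is $[R_j]_{21}=[R_j]_{31}=[R_j]_{32}=0$ and $[R_j]_{22}=S_j$. The last is immediate: $[R_j]_{22}=P_{\mathcal{H}}R_j|_{\mathcal{H}}=S_j$ is the dilation identity with $k_j=1$ and all other exponents ($l$ included) zero.

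First I would show $[R_j]_{21}=P_{\mathcal{H}}R_j|_{l^2(\mathcal{D}_P)}=0$. From the matrix of $U$ one reads off $(U-P)h=(D_Ph,0,0,\dots)\in l^2(\mathcal{D}_P)$ for $h\in\mathcal{H}$; since $U$ leaves $l^2(\mathcal{D}_P)$ invariant and acts there as the forward shift $U_1$, the vectors $U^{i}(U-P)h$ ($i\ge0$, $h\in\mathcal{H}$) are precisely the vectors of $l^2(\mathcal{D}_P)$ having $D_Ph$ in coordinate $i$ and $0$ elsewhere, so they span a dense subspace of $l^2(\mathcal{D}_P)$ (as $\{D_Ph:h\in\mathcal{H}\}$ is dense in $\mathcal{D}_P$). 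For such a vector, $U^{i}(U-P)h=U^{i+1}h-U^{i}(Ph)$ with $Ph\in\mathcal{H}$, so applying the dilation identity with $k_j=1$ (to $h$ with $l=i+1$, and to $Ph$ with $l=i$) yields $P_{\mathcal{H}}R_j\bigl(U^{i+1}h-U^{i}(Ph)\bigr)=S_jP^{i+1}h-S_jP^{i}(Ph)=0$. Hence $[R_j]_{21}=0$.

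Next I would extract block relations from $R_jU=UR_j$. Comparing $(2,1)$-entries and using $[R_j]_{21}=0$ gives $U_4[R_j]_{31}=0$; since $U_4(b_0,b_1,\dots)=D_{P^*}b_0$ and $D_{P^*}$ restricts to an injective operator on $\mathcal{D}_{P^*}$, this says the zeroth coordinate of $[R_j]_{31}x$ vanishes for every $x\in l^2(\mathcal{D}_P)$. Comparing $(3,1)$-entries gives $[R_j]_{31}U_1=U_5[R_j]_{31}$, hence $[R_j]_{31}U_1^{n}=U_5^{n}[R_j]_{31}$ for all $n\ge0$; evaluating at $x$ and reading off zeroth coordinates --- the left side has zeroth coordinate $0$ (previous sentence, applied to $U_1^{n}x$), the right side has zeroth coordinate $([R_j]_{31}x)_n$ --- shows $[R_j]_{31}=0$. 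For the remaining entry, comparing $(2,2)$-entries of $R_jU=UR_j$ and using $[R_j]_{21}=0$, $[R_j]_{22}=S_j$ gives $S_jP=PS_j+U_4[R_j]_{32}$; since $(S_1,\dots,S_{n-1},P)$ is a commuting tuple, $S_jP=PS_j$, so $U_4[R_j]_{32}=0$ and the zeroth coordinate of $[R_j]_{32}h$ vanishes for all $h\in\mathcal{H}$; comparing $(3,2)$-entries and using $[R_j]_{31}=0$ gives $[R_j]_{32}P=U_5[R_j]_{32}$, and the same coordinate-chasing yields $[R_j]_{32}=0$. Together with $[R_j]_{21}=0$, $[R_j]_{31}=0$, $[R_j]_{22}=S_j$, this is the claimed matrix form.

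The step I expect to be the real obstacle is the vanishing of $[R_j]_{32}$ (and, to a lesser extent, of $[R_j]_{31}$): the dilation identities involve only the compression $P_{\mathcal{H}}(\cdot)|_{\mathcal{H}}$, so they determine the first column of $R_j$ but say nothing about its bottom row. What supplies the missing information is the commutation with $U$ together with the very specific shape of $U$ --- the map $U_4$, which detects the zeroth coordinate of $l^2(\mathcal{D}_{P^*})$, and the backward shift $U_5$ --- and the commutativity $S_jP=PS_j$. The same conclusion can also be reached more symmetrically: once one knows (from the second paragraph and the $(3,1)$-computation) that $l^2(\mathcal{D}_P)$ is $R_j$-invariant, one applies this to the adjoint data, namely that $(R_1^*,\dots,R_{n-1}^*,U^*)$ is a dilation of $(S_1^*,\dots,S_{n-1}^*,P^*)$ and $U^*$ is the Sch\"affer minimal unitary dilation of $P^*$ with respect to the reversed decomposition $l^2(\mathcal{D}_{P^*})\oplus\mathcal{H}\oplus l^2(\mathcal{D}_P)$, to conclude that $l^2(\mathcal{D}_{P^*})$ is $R_j^*$-invariant, i.e. $l^2(\mathcal{D}_P)\oplus\mathcal{H}$ is $R_j$-invariant, which is precisely $[R_j]_{31}=[R_j]_{32}=0$.
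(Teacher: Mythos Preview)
Your proof is correct and follows the same overall strategy as the paper: use the dilation identities to handle the entries in the $\mathcal{H}$-row, and then use the commutation $R_jU=UR_j$ together with the explicit block form of $U$ to kill the remaining off-diagonal entries. The details differ slightly. The paper first obtains $[R_j]_{32}=0$ by the dual argument you sketch at the end (applying the dilation identity to $R_j^*U^{*m}$ on $\tilde{\mathcal{N}}=\bigvee_{m\ge0}U^{*m}\mathcal{H}$), and only then turns to $[R_j]_{31}$, using the $(3,1)$- and $(3,2)$-entries of $R_jU=UR_j$ to get $R^j_{31}U_1=U_5R^j_{31}$ and $R^j_{31}U_2=0$, hence $R^j_{31}=U_5^nR^j_{31}U_1^{*n}\to0$. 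You reverse the order: first the $(2,1)$-entry (giving $U_4[R_j]_{31}=0$) together with the $(3,1)$-entry yields $[R_j]_{31}=0$ by a coordinate argument; then the $(2,2)$- and $(3,2)$-entries, together with the explicit use of $S_jP=PS_j$, give $[R_j]_{32}=0$. Both routes are short and valid; the paper's symmetry argument avoids the explicit appeal to $S_jP=PS_j$, while your block-by-block computation makes the role of the commutation with $U$ more transparent.
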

\begin{proof}
Let $R_j=(R^j_{kl})_{k,l=1}^3$ with respect to $\mathcal{K}=l^2(\mathcal{D}_P) \oplus \mathcal{H} \oplus l^2(\mathcal{D}_{P^*})$ for each $j=1,2,\dots,n-1$. Call $\tilde{\mathcal{H}}=l^2(\mathcal{D}_P) \oplus \mathcal{H}$. Since $U$ is minimal we have $\mathcal{K}=\bigvee_{m=-\infty}^{\infty}U^m\mathcal{H}$ and $\tilde{\mathcal{H}}=\bigvee_{m=0}^{\infty}U^m\mathcal{H}=\bigvee_{m=0}^{\infty}V^m\mathcal{H}$, where $V$ is the minimal isometry dilation of $P$.
Note that
$$
P_{\mathcal{H}}R_j(U^mh)=S_jP^mh= S_jP_{\mathcal{H}}U^mh, \text{ for all $h \in \mathcal{H}, m \in \mathbb{N}$ and $j=1,2,\dots, n-1$.}
$$
Hence we have $P_{\mathcal{H}}R_j|_{\tilde{\mathcal{H}}}=S_jP_{\mathcal{H}}|_{\tilde{\mathcal{H}}}$ or equivalently $S_j^*=P_{\tilde{\mathcal{H}}}R_j^*|_{\mathcal{H}}$ for all $j=1,2,\dots, n-1$. This shows that $R^j_{21}=0$, for all $j=1,2,\dots,n-1$.
\\
Call $\tilde{\mathcal{N}}=\mathcal{H} \oplus l^2(\mathcal{D}_{P^*})$, then note that $\tilde{\mathcal{N}}=\bigvee_{n=0}^{\infty}{U^*}^n\mathcal{H}$.
We have
$$
P_{\mathcal{H}}R^*_j({U^*}^mh)=S^*_j{P^*}^mh= S^*_jP_{\mathcal{H}}{U^*}^mh, \text{ for all $h \in \mathcal{H}, m \in \mathbb{N}$ and $j=1,2,\dots, n-1$.}
$$
This and a similar argument as above give us $S_j=P_{\tilde{\mathcal{N}}}R_j|_{\mathcal{H}}$. Therefore $R^j_{32}=0$, for all $j=1,2,\dots,n-1$.
\\
So far, we have showed that for each $j=1,2,\dots, n-1$, $R_j$ admits the matrix representation of the form
$$
\begin{pmatrix}
R^j_{11} & R^j_{12} & R^j_{13} \\
0 & S_j & R^j_{23} \\
R^j_{31} & 0 & R^j_{33} \\
\end{pmatrix},
$$
with respect to the decomposition $\mathcal{K}=l^2(\mathcal{D}_P) \oplus \mathcal{H} \oplus l^2(\mathcal{D}_{P^*})$. To show that $R^j_{13}=0$ we proceed as follows.
From the commutativity of $R_j$ with $U$ we get, by an easy matrix calculation
\begin{eqnarray}\label{addition}
R^j_{31}U_1=U_5R^j_{31} \text{ and } R^j_{31}U_2=0,
\end{eqnarray}
(equating the $31^{th}$ entries and $32^{th}$ entries of $R_jU$ and $UR_j$ respectively).
By the definition of $U_2$, we have $RanU_2=Ran(I-U_1U_1^*)$. Therefore $R^j_{31}(I-U_1U_1^*)=0$. Which with the first equation of (\ref{addition}) gives
$R^j_{31} = U_5R^j_{31}U_1^*$. Which gives after n-th iteration $R^j_{31}=U_5^nR^j_{31}{U_1^*}^n$. Now since ${U_1^*}^n \to 0$ as $n \to \infty$, we have that $R^j_{31}=0$ for each $j=1,2,\dots, n-1$. This completes the proof of the lemma.
\end{proof}

First we prove a weaker version of the uniqueness theorem, which will be used to prove the stronger version of the uniqueness theorem.
\begin{lemma}\label{Uniqueness1}
Suppose $(A,B,P)$ is a tetrablock contraction on a Hilbert space $\mathcal{H}$ and $(R_1,R_2,U)$ is the above tetrablock unitary dilation of $(A,B,P)$. If $(\tilde{R}_1,\tilde{R}_2,U)$ is another tetrablock unitary dilation of $(A,B,P)$ such that $\tilde{R}_1$ and $\tilde{R}_2$ are extensions of $V_1$ and $V_2$ respectively, then $\tilde{R}_1=R_1$ and $\tilde{R}_2=R_2$.
\end{lemma}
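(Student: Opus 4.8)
The plan is to exploit the rigidity forced by the fact that the third component $U$ is fixed: by Theorem \ref{1st major thm}, $U$ has the block form $\begin{pmatrix} V_3 & C_3 \\ 0 & D_3 \end{pmatrix}$ with respect to $\mathcal{K} = \tilde{\mathcal{H}} \oplus l^2(\mathcal{D}_{P^*})$, where $D_3$ is the backward shift on $l^2(\mathcal{D}_{P^*})$. Since $\tilde{R}_1$ and $\tilde{R}_2$ are extensions of $V_1$ and $V_2$, they have upper triangular block forms $\tilde{R}_1 = \begin{pmatrix} V_1 & \tilde{C}_1 \\ 0 & \tilde{D}_1 \end{pmatrix}$ and $\tilde{R}_2 = \begin{pmatrix} V_2 & \tilde{C}_2 \\ 0 & \tilde{D}_2 \end{pmatrix}$ with respect to the same decomposition, so the whole task is to identify the four corner operators, and I will show that it suffices to identify $\tilde{C}_1$ and $\tilde{D}_1$.

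First I would use part (2) of Theorem \ref{tetrathm1}: since $(\tilde{R}_1, \tilde{R}_2, U)$ is a tetrablock unitary and $U$ is unitary, $\tilde{R}_1 = \tilde{R}_2^* U$. Reading off the $(1,2)$ block of $\tilde{R}_2^* U$ gives $\tilde{C}_1 = V_2^* C_3$, and the identity $V_2^* C_3 = C_1$ has already been verified (via Lemma \ref{tetralem2}) in the proof of part (ii) of Theorem \ref{1st major thm}; hence $\tilde{C}_1 = C_1$. Next I would extract $\tilde{D}_1$ from the commutativity $\tilde{R}_1 U = U \tilde{R}_1$: comparing $(2,2)$ blocks gives $\tilde{D}_1 D_3 = D_3 \tilde{D}_1$, and comparing $(1,2)$ blocks gives $V_1 C_3 + C_1 D_3 = V_3 C_1 + C_3 \tilde{D}_1$; subtracting from this the corresponding identity $V_1 C_3 + C_1 D_3 = V_3 C_1 + C_3 D_1$ that comes from $R_1 U = U R_1$ leaves $C_3(\tilde{D}_1 - D_1) = 0$.

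Now set $N = \tilde{D}_1 - D_1$ on $l^2(\mathcal{D}_{P^*})$. A one-line computation shows $\lVert C_3(b_0,0,0,\dots)\rVert^2 = \lVert D_{P^*}b_0\rVert^2 + \lVert P^* b_0\rVert^2 = \lVert b_0\rVert^2$, so $C_3$ is isometric on the zeroth coordinate, and therefore $\ker C_3$ is precisely the set of sequences with vanishing zeroth entry. Hence $C_3 N = 0$ says exactly that $E_0 N = 0$, where $E_0(b_0, b_1, \dots) = b_0$. Combining this with $N D_3 = D_3 N$ (so $N D_3^{\,k} = D_3^{\,k} N$ for all $k$) gives, for every $k \geq 0$ and every $f \in l^2(\mathcal{D}_{P^*})$, that the $k$-th entry of $Nf$ equals $E_0 D_3^{\,k} N f = E_0 N D_3^{\,k} f = 0$; thus $N = 0$ and $\tilde{D}_1 = D_1$, i.e. $\tilde{R}_1 = R_1$. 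Finally, from $\tilde{R}_2^* U = \tilde{R}_1 = R_1 = R_2^* U$ and the invertibility of $U$ we conclude $\tilde{R}_2^* = R_2^*$, hence $\tilde{R}_2 = R_2$.

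I expect the main obstacle to be the step identifying $\tilde{D}_1$. Its two ingredients — that $C_3$ is isometric on constant sequences, so that $\ker C_3$ is as small as possible, and that an operator on $l^2(\mathcal{D}_{P^*})$ commuting with the backward shift whose left action annihilates the zeroth coordinate must vanish — are each elementary, but the second is exactly the place where the hypothesis ``$\tilde{R}_1$ commutes with $U$'' is converted into the rigidity of the dilation. Everything else is routine $2\times 2$ operator-matrix bookkeeping together with reuse of the block computations already carried out in Section 3.
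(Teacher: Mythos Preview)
Your proof is correct and takes a genuinely different route from the paper's. Both arguments begin the same way: write $\tilde{R}_1,\tilde{R}_2$ in block upper-triangular form and read off $\tilde{C}_1=V_2^*C_3=C_1$ from $\tilde{R}_1=\tilde{R}_2^*U$ (the paper also computes $\tilde{C}_2=V_1^*C_3=C_2$ symmetrically). Where the two diverge is in pinning down the $(2,2)$ corners. The paper multiplies the intertwining relation $V_2C_3+\tilde{C}_2D_3=V_3\tilde{C}_2+C_3\tilde{D}_2$ on the left by $C_3^*$, uses the unitarity relations $C_3^*C_3=I-D_3^*D_3$ and $C_3^*V_3=0$ to isolate $\tilde{D}_2^*$ on the zeroth-coordinate subspace, and then carries out an explicit coordinate computation (invoking Lemmas~\ref{tetralem2} and~\ref{tetralem4}) to recognise the result as $D_2^*$; the identification of $\tilde{D}_1$ is done analogously. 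You instead form the difference $N=\tilde{D}_1-D_1$, use the two commutation relations with $D_3$ to get $ND_3=D_3N$, and observe that $C_3N=0$ forces the zeroth coordinate of $Nf$ to vanish for every $f$, hence (via $E_0D_3^{\,k}N=E_0ND_3^{\,k}=0$) all coordinates vanish and $N=0$. Finally, you recover $\tilde{R}_2=R_2$ in one stroke from $\tilde{R}_2^*U=\tilde{R}_1=R_1=R_2^*U$.

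Your argument is shorter and more structural: it sidesteps the explicit appearances of $G_1,G_2,F_1,F_2$ in the paper's computation and isolates the single abstract fact doing the work, namely that an operator on $l^2(\mathcal{D}_{P^*})$ commuting with the backward shift and annihilated by $C_3$ must vanish. The paper's approach, on the other hand, has the minor advantage of producing the explicit formula for $\tilde{D}_2^*$ along the way, so one sees $D_2$ emerge directly rather than by comparison.
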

\begin{proof}Since $\tilde{R}_1$ and $\tilde{R}_2$ on the Hilbert space $\mathcal{K}$ are such that they are extensions of $V_1$ and $V_2$ respectively, the matrix representations of $\tilde{R_1}$ and $\tilde{R_2}$ with respect to the decomposition $\mathcal{K}=\tilde{H} \oplus l^2(\mathcal{D}_{P^*})$ will be of the form
$
\begin{pmatrix}
V_1 & \tilde{\tilde{C}}_1 \\
0 & \tilde{D}_1 \\
\end{pmatrix}
$ and
$
\begin{pmatrix}
V_2 & \tilde{C}_2 \\
0 & \tilde{D}_2 \\
\end{pmatrix}
$
respectively, where $\tilde{C}_1,\tilde{C}_2:l^2({\mathcal{D}_{P^*}}) \to \tilde{\mathcal{H}}$ and $\tilde{D}_1,\tilde{D}_2:l^2({\mathcal{D}_{P^*}}) \to l^2({\mathcal{D}_{P^*}})$ are some operators. We want to show that $\tilde{C}_1,\tilde{C}_2,\tilde{D}_1$ and $\tilde{D}_2$ are same as $C_1,C_2,D_1$ and $D_2$ respectively.
Since $(\tilde{R}_1,\tilde{R}_2,U)$ is a tetrablock unitary, we have by part (2) of Theorem \ref{tetrathm1} the following:
$\tilde{R_1},\tilde{R_2}$  and unitary operator $U$ commute, $\tilde{R_2}$ is a contraction, and $\tilde{R_1}=\tilde{R_2}^*U.$
The fact that $U$ is unitary, gives us
\begin{eqnarray}\label{tetra1}
D_3^*D_3+C_3^*C_3=I \text{ and }C_3^*V_3=0.
\end{eqnarray}
The fact that $\tilde{R_2}$ and $U$ commute, gives us
\begin{eqnarray}\label{tetra23}
V_2C_3+\tilde{C}_2D_3=V_3\tilde{C}_2+C_3\tilde{D}_2 \text{ and } \tilde{D}_2D_3=D_3\tilde{D}_2.
\end{eqnarray}
The fact that $\tilde{R_1}$ and $U$ commute, gives us
\begin{eqnarray}\label{tetra13}
V_1C_3+\tilde{C}_1D_3=V_3\tilde{C}_1+C_3\tilde{D}_1 \text{ and } \tilde{D}_1D_3=D_3\tilde{D}_1,
\end{eqnarray}
and commutativity of $\tilde{R_1}$ and $\tilde{R_2}$ gives
\begin{eqnarray}\label{tetra12}
V_1\tilde{C}_2+\tilde{C}_1\tilde{D}_2=V_2\tilde{C}_1+\tilde{C}_2\tilde{D}_1 \text{ and } \tilde{D}_1\tilde{D}_2=\tilde{D}_2\tilde{D}_1.
\end{eqnarray}
Since $\tilde{R_1}=\tilde{R_2}^*U$, we have
\begin{eqnarray}\label{tetra123}
\tilde{C}_1=V_2^*C_3 \text{ and } \tilde{D}_1=\tilde{C}_2^*C_3+\tilde{D}_2^*D_3.
\end{eqnarray}
Therefore
\begin{eqnarray*}
\tilde{C}_1(a_0,a_1,a_2,\dots)
&=&
V_2^*C_3(a_0,a_1,a_2,\dots)
\\
&=&
V_2^*(D_{P^*}a_0 \oplus (-P^*a_0,0,0,\dots))
\\
&=&
((B^*D_{P^*}-D_PF_1P^*)a_0 \oplus (-F_2^*P^*a_0,0,0,\dots))
\\
&=&
(D_{P^*}G_2a_0 \oplus (-F_2^*P^*a_0,0,0,\dots))\;[\text{by Lemma \ref{tetralem2}}]
\\
&=&
C_1(a_0,a_1,a_2,\dots).
\end{eqnarray*}
Also we have $\tilde{R_2}=\tilde{R_1}^*U$, which gives
\begin{eqnarray}\label{tetra213}
\tilde{C}_2=V_1^*C_3 \text{ and } \tilde{D}_2=\tilde{C}_1^*C_3+\tilde{D}_1^*D_3.
\end{eqnarray}
Therefore
\begin{eqnarray*}
\tilde{C}_2(a_0,a_1,a_2,\dots)
&=&
V_1^*C_3(a_0,a_1,a_2,\dots)
\\
&=&
V_1^*(D_{P^*}a_0 \oplus (-P^*a_0,0,0,\dots))
\\
&=&
((A^*D_{P^*}-D_PF_2P^*)a_0 \oplus (-F_1^*P^*a_0,0,0,\dots))
\\
&=&
(D_{P^*}G_1a_0 \oplus (-F_1^*P^*a_0,0,0,\dots))\;[\text{by Lemma \ref{tetralem2}}]
\\
&=&
C_2(a_0,a_1,a_2,\dots).
\end{eqnarray*}
To find $\tilde{D}_1$ and $\tilde{D}_2$, we proceed by multiplying (\ref{tetra23}) by $C_3^*$ from left. Then using (\ref{tetra1}), we get
$$
\tilde{D}_2^*(I-D_3^*D_3)=D_3^*\tilde{C}_2^*C_3+C_3^*V_2^*C_3.
$$
Noting that $(I-D_3^*D_3)$ is the orthogonal projection of $l^2(\mathcal{D}_{P^*})$ onto the first component, we get
\begin{eqnarray*}
&&\tilde{D}_2^*(a_0,0,0,\dots)
\\
&=&
(D_3^*\tilde{C}_2^*C_3+C_3^*V_2^*C_3)(a_0,a_1,a_2,\dots)
\\
&=&
D_3^*\tilde{C}_2^*(D_{P^*}a_0 \oplus (-P^*a_0,0,0,\dots))+C_3^*V_2^*(D_{P^*}a_0 \oplus (-P^*a_0,0,0,\dots))
\\
&=&
D_3^*((G_1^*(I-PP^*)+P F_1P^*)a_0,0,0,\dots) \\
&& + \; C_3^*((B^*D_{P^*}-D_P F_1P^*)a_0 \oplus (-F_2^*P^*a_0,0,0,\dots))
\\
&=&
(0,(G_1^*(I-PP^*)+P F_1P^*)a_0,0,0,\dots) \\
&& + \; ((D_{P^*}B^*D_{P^*}-D_{P^*}D_P F_1P^*+P F_2^*P^*)a_0,0,0,\dots)
\\
&=&
((D_{P^*}(B^*D_{P^*}-D_P F_1P^*)+P F_2^*P^*)a_0,(G_1^*(I-PP^*)+P F_1P^*)a_0,0,0,\dots)
\\
&=&
(((I-PP^*)G_2+P F_2^*P^*)a_0,(G_1^*(I-PP^*)+P F_1P^*)a_0,0,0,\dots) \; [\text{ by Lemma \ref{tetralem2}}]
\\
&=&
(G_2a_0,G_1^*a_0,0,0,\dots) \; [\text{ by Lemma \ref{tetralem4}}]
\end{eqnarray*}
Therefore for $a\in\mathcal{D}_{P^*}$, we have
$
\tilde{D}_2^*(a,0,0,\dots)=(G_2a,G_1^*a,0,0,\dots).
$
\begin{eqnarray*}
&&\tilde{D}_2^*(\overbrace{0,\dots,0}^\text{$n$ times},a,0,\dots )
\\
&=& \tilde{D}_2^*{D_3^*}^n(a,0,0,0,\dots)
\\
&=&
{D_3^*}^n\tilde{D}_2^*(a,0,0,0,\dots) [\text{ using last equation of (\ref{tetra23})}]
\\
&=&
{D_3^*}^n(G_2a,G_1^*a,0,0,\dots)
\\
&=&
(\overbrace{0,\dots,0}^{\text{$n$ times}},G_2a,G_1^*a,0,0,\dots), \text{ for every $n \geq 0.$}.
\end{eqnarray*}
Therefore for an arbitrary element $(a_0,a_1,a_2,\dots) \in l^2(\mathcal{D}_{P^*})$, we have
\begin{eqnarray*}
\tilde{D}_2^*(a_0,a_1,a_2,\dots)&=&\tilde{D}_2^*((a_0,0,0,\dots) + (0,a_1,0,\dots) + (0,0,a_2,\dots) +\cdots )
\\
&=&
(G_2a_0,G_1^*a_0,0,0,\dots) \\
&& + \;  (0,G_2a_1,G_1^*a_1,0,0,\dots) + (0,0,G_2a_2,G_1^*a_2,0,0,\dots) + \cdots
\\
&=&
(G_2a_0,G_1^*a_0+G_2a_1,G_1^*a_1+G_2a_2,\dots).
\end{eqnarray*}
For $(a_0,a_1,a_2,\dots)$ and $(b_0,b_1,b_2,\dots)$ in $l^2(\mathcal{D}_{P^*})$, we have
\begin{eqnarray*}
&&\langle(a_0,a_1,a_2,\dots),\tilde{D}_2^*(b_0,b_1,b_2,\dots) \rangle
\\
&=& \langle (a_0,a_1,a_2,\dots), (G_2b_0,G_1^*b_0+G_2b_1,G_1^*b_1+G_2b_2,\dots) \rangle
\\
&=&
\langle a_0,G_2 b_0 \rangle + \langle a_1,G_1^*b_0+G_2 b_1 \rangle + \langle a_2,G_1^*b_1+G_2 b_2 \rangle + \cdots
\\
&=&
\langle G_2^*a_0+G_1a_1,b_0 \rangle + \langle G_2^*a_1+G_1a_2,b_1 \rangle + \langle G_2^*a_2+G_1a_3, b_2 \rangle + \cdots
\\
&=&
\langle (G_2^*a_0+G_1a_1,G_2^*a_1+G_1a_2,G_2^*a_2+G_1a_3,\dots), (b_0,b_1,b_2,\dots) \rangle.
\end{eqnarray*}
Hence by definition of adjoint of an operator we have
\\$\tilde{D}_2(a_0,a_1,a_2,\dots)= (G_2^*a_0+G_1a_1,G_2^*a_1+G_1a_2,G_2^*a_2+G_1a_3,\dots)=D_2(a_0,a_1,a_2,\dots)$, for every $(a_0,a_1,a_2,\dots) \in l^2(\mathcal{D}_{P^*})$.
Therefore $\tilde{R_2}=R_2$.
\\
Similarly, multiplying (\ref{tetra13}) by $C_3^*$ from left, using (\ref{tetra1}) and proceeding in the same way as above one gets $\tilde{D}_1$ to be
$$
\tilde{D}_1(a_0,a_1,a_2,\dots)=(G_1^*a_0+G_2a_1,G_1^*a_1+G_2a_2,G_1^*a_2+G_2a_3,\dots)=D_1(a_0,a_1,a_2,\dots).
$$
Hence $\tilde{R_1}=R_1$.
This completes the proof.
\end{proof}
Now we are ready to state and prove the stronger version of the uniqueness theorem, the main result of this section.
\begin{theorem}[Uniqueness]\label{uniqueness}
Let $(A,B,P)$ be a tetrablock contraction on a Hilbert space $\mathcal{H}$ and $(R_1,R_2,U)$ as defined in Theorem \ref{1st major thm}, be the tetrablock unitary dilation of $(A,B,P)$.
\begin{enumerate}
\item[(i)] If $(\tilde{R}_1,\tilde{R}_2,U)$ is another tetrablock unitary dilation of $(A,B,P)$, then $\tilde{R}_1=R_1$ and $\tilde{R}_2=R_2$.
\item[(ii)] If $(\tilde{R}_1,\tilde{R}_2,\tilde{U})$ on some Hilbert space $\tilde{\mathcal{K}}$ containing $\mathcal{H}$, is another tetrablock unitary dilation of $(A,B,P)$ where $\tilde{U}$ is a minimal unitary dilation of $P$, then $(\tilde{R}_1,\tilde{R}_2,\tilde{U})$ is unitarily equivalent to $(R_1,R_2,U)$.
\end{enumerate}
\end{theorem}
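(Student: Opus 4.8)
The plan is to deduce both parts from Lemma~\ref{Uniqueness1}, once one knows that the tetrablock \emph{isometric} co-extension of $(A,B,P)$ with minimal third component is unique. For part (i): since $U$ is the Sch\"affer minimal unitary dilation of $P$, Lemma~\ref{added lemma} applies to $(\tilde{R}_1,\tilde{R}_2,U)$ and shows that, with respect to $\mathcal{K}=l^2(\mathcal{D}_P)\oplus\mathcal{H}\oplus l^2(\mathcal{D}_{P^*})$, each $\tilde{R}_j$ ($j=1,2$) is upper triangular with $A$, resp.\ $B$, in the $(2,2)$ slot. I would read off from this that $\tilde{\mathcal{H}}:=l^2(\mathcal{D}_P)\oplus\mathcal{H}$ (the space $\tilde{\mathcal{H}}$ of Theorem~\ref{1st major thm}) is jointly invariant for $\tilde{R}_1,\tilde{R}_2,U$, that $U|_{\tilde{\mathcal{H}}}$ equals $V_3$, and that the vanishing $(2,1)$ block makes $\mathcal{H}$ co-invariant for each $\tilde{R}_j|_{\tilde{\mathcal{H}}}$. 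Thus $(W_1,W_2,V_3):=(\tilde{R}_1|_{\tilde{\mathcal{H}}},\tilde{R}_2|_{\tilde{\mathcal{H}}},V_3)$ is a tetrablock isometry (restriction of a tetrablock unitary to a joint invariant subspace) that co-extends $(A,B,P)$, with $V_3$ the minimal isometric dilation of $P$. If $W_1=V_1$ and $W_2=V_2$, then $\tilde{R}_j$ is an extension of $V_j$ and Lemma~\ref{Uniqueness1} gives $\tilde{R}_j=R_j$.

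For this uniqueness of the isometric co-extension I would rerun the fundamental-operator argument in the proof of Lemma~\ref{brandnew}, observing that it uses only the co-extension property and the fact that $V_3$ is the minimal isometric dilation of $P$ (equivalently, the unitary $X\colon\mathcal{D}_{P^*}\to\mathcal{D}_{V_3^*}$ with $XD_{P^*}=D_{V_3^*}$), never the explicit form of $V_1,V_2$. It identifies the fundamental operators of the tetrablock co-isometry $(W_1^*,W_2^*,V_3^*)$ as $XG_1X^*$ and $XG_2X^*$ --- exactly those of $(V_1^*,V_2^*,V_3^*)$ --- whence $W_1^*-W_2V_3^*=D_{V_3^*}(XG_1X^*)D_{V_3^*}=V_1^*-V_2V_3^*$ and $W_2^*-W_1V_3^*=D_{V_3^*}(XG_2X^*)D_{V_3^*}=V_2^*-V_1V_3^*$. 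Writing $E_j:=W_j-V_j$ and subtracting, $E_1^*=E_2V_3^*$ and $E_2^*=E_1V_3^*$, i.e.\ $E_1=V_3E_2^*$ and $E_2=V_3E_1^*$. Since both triples agree with $(A,B,P)$ on $\mathcal{H}$, each $E_j^*$ annihilates $\mathcal{H}$, so $E_1|_{\mathcal{H}}=V_3E_2^*|_{\mathcal{H}}=0$ and likewise $E_2|_{\mathcal{H}}=0$; and since each $E_j$ commutes with $V_3$ while $\tilde{\mathcal{H}}=\bigvee_{n\geq0}V_3^n\mathcal{H}$ by minimality, $E_j$ vanishes on $\tilde{\mathcal{H}}$, i.e.\ $W_j=V_j$. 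Part (i) then follows from Lemma~\ref{Uniqueness1}.

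For part (ii) I would invoke the uniqueness of the minimal unitary dilation of a contraction (see \cite{Nagy-Foias}): there is a unitary $\tau\colon\tilde{\mathcal{K}}\to\mathcal{K}$ with $\tau|_{\mathcal{H}}=I_{\mathcal{H}}$ and $\tau\tilde{U}\tau^*=U$. Then $(\tau\tilde{R}_1\tau^*,\tau\tilde{R}_2\tau^*,U)$ is a tetrablock unitary (being unitarily equivalent to $(\tilde{R}_1,\tilde{R}_2,\tilde{U})$), and since $\tau$ fixes $\mathcal{H}$ and hence intertwines the orthogonal projections onto $\mathcal{H}$, it is a tetrablock unitary dilation of $(A,B,P)$. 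By part (i), $\tau\tilde{R}_j\tau^*=R_j$ for $j=1,2$, so $(\tilde{R}_1,\tilde{R}_2,\tilde{U})=(\tau^*R_1\tau,\tau^*R_2\tau,\tau^*U\tau)$ is unitarily equivalent to $(R_1,R_2,U)$.

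The step I expect to be the real obstacle is the uniqueness of the tetrablock isometric co-extension in the second paragraph. The two delicate points are: (a) confirming that the Lemma~\ref{brandnew} computation is genuinely model-free --- that it used only the co-extension property and the minimality of $V_3$, so it applies to an arbitrary such co-extension and not just to the constructed $(V_1,V_2,V_3)$; and (b) seeing that the difference operators $E_j$ are killed by the combination ``they vanish on $\mathcal{H}$'', ``they commute with $V_3$'', and ``$\mathcal{H}$ is cyclic for $V_3$''. Everything else --- the block bookkeeping coming out of Lemma~\ref{added lemma} and the appeal to uniqueness of the minimal unitary dilation --- is routine.
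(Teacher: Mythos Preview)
Your proposal is correct, and its overall architecture coincides with the paper's: for part~(i) you invoke Lemma~\ref{added lemma} to get the block upper-triangular form, deduce that $\tilde{\mathcal{H}}$ is jointly invariant and that the restrictions $(W_1,W_2,V_3)$ form a tetrablock isometric co-extension of $(A,B,P)$ with $V_3$ the minimal isometric dilation of $P$, conclude $W_j=V_j$, and finish with Lemma~\ref{Uniqueness1}; part~(ii) is handled exactly as in the paper, via the uniqueness of the minimal unitary dilation.

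The one genuine point of departure is how you establish $W_j=V_j$. The paper does not argue this internally: it simply says ``it follows from the argument given in the proof of Theorem~6.1(2) of \cite{sir's tetrablock paper}''. You instead supply a self-contained proof: you repeat the computation from Lemma~\ref{brandnew} to show that $(W_1^*,W_2^*,V_3^*)$ and $(V_1^*,V_2^*,V_3^*)$ have the \emph{same} fundamental operators $XG_1X^*,\,XG_2X^*$, subtract the defining equations to get $E_1=V_3E_2^*$ and $E_2=V_3E_1^*$ for $E_j=W_j-V_j$, and then kill $E_j$ using $E_j|_{\mathcal H}=0$, $[E_j,V_3]=0$, and the cyclicity $\tilde{\mathcal H}=\bigvee_{n\ge 0}V_3^n\mathcal H$. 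Both of the points you flagged as delicate are in fact fine: for~(a), the Lemma~\ref{brandnew} computation uses only $W_j^*|_{\mathcal H}=A^*,B^*$, $V_3^*|_{\mathcal H}=P^*$, and $XD_{P^*}=D_{V_3^*}$ (the latter from minimality of $V_3$), together with the density of $D_{V_3^*}\mathcal H$ in $\mathcal D_{V_3^*}$, so it applies verbatim to any tetrablock isometric co-extension with minimal third component; for~(b), commutation of $E_j$ with $V_3$ is immediate since both $W_j$ and $V_j$ commute with $V_3$, and the cyclicity argument is then a one-liner. Your route has the advantage of being self-contained within the present paper, at the cost of one extra paragraph; the paper's route is shorter but relies on an external reference.
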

\begin{proof}
(i) Since $(\tilde{R}_1,\tilde{R}_2,U)$ is a dilation of $(A,B,P)$, by Lemma \ref{added lemma} $\tilde{R_1}$ and $\tilde{R_2}$ are of the form
$$
\left(
  \begin{array}{cc}
   T_1  & \tilde{R'}_{12} \\
    0 & \tilde{R'}_{22} \\
  \end{array}
  \right)
  \text{ and }
\left(
  \begin{array}{cc}
   T_2  & \tilde{R''}_{12} \\
    0 & \tilde{R''}_{22} \\
  \end{array}
  \right) \text{ respectively,}
$$
with respect to the decomposition $\tilde{\mathcal{H}} \oplus l^2(\mathcal{D}_{P^*})$. Where $T_1$ and $T_2$ are operators on $\tilde{\mathcal{H}}$, which admit the matrix representation
$$
\left(
  \begin{array}{cc}
   T'_{11}  & T'_{12} \\
    0 & A \\
  \end{array}
  \right) \text{ and }
\left(
  \begin{array}{cc}
   T''_{11}  & T''_{12} \\
    0 & B \\
  \end{array}
  \right) \text{ respectively,}
$$
with respect to the decomposition $l^2(\mathcal{D}_P) \oplus \mathcal{H}  $. Since $(T_1,T_2,V)$ on $\tilde{H}$ is the restriction of the tetrablock contraction $(\tilde{R}_1,\tilde{R}_2,U)$ to $\tilde{\mathcal{H}}$ and $V$ is an isometry, we have $(T_1,T_2,V)$ a tetrablock isometry. Also note that ${T_1}^*|_{\mathcal{H}}=A^*$, $T_2^*|_{\mathcal{H}}=B^*$ and $V^*|_{\mathcal{H}}=P^*$. So $(T_1,T_2,V)$ is a tetrablock isometric dilation of $(A,B,P)$, where $V$ is the Sch$\ddot{\text{a}}$ffer minimal isometric dilation of $P$. Now it follows from the argument given in the proof of Theorem 6.1(2) of \cite{sir's tetrablock paper} that $T_1=V_1$ and $T_2=V_2$, where $V_1$ and $V_2$ are as in Theorem \ref{1st major thm}. Therefore $\tilde{R_1}$ and $\tilde{R_2}$ are extensions of $V_1$ and $V_2$ respectively. Now the proof follows from Lemma \ref{Uniqueness1}.

(ii) Since $\tilde{U}$ is a minimal unitary dilation of $P$, there exists a unitary operator $W: \tilde{\mathcal{K}} \to \mathcal{K}$ such that $W\tilde{U}W^*=U$ and $Wh=h$ for all $h \in \mathcal{H}$. This shows that $(W\tilde{R_1}W^*,W\tilde{R_2}W^*,W\tilde{U}W^*)$ is another tetrablock unitary dilation of $(A,B,P)$. But $W\tilde{U}W^*=U$. Hence by part (i) we have $(W\tilde{R_1}W^*,W\tilde{R_2}W^*,W\tilde{U}W^*)=(R_1,R_2,U)$. This completes the proof of part (ii).
\end{proof}

We conclude with the following remark.
\begin{remark}
 The tetrablock unitary dilation we have constructed in this section, is simple because it acts on a small space, viz., the minimal unitary dilation space of the contraction $P$. Moreover, we have showed that if the unitary part of the tetrablock unitary dilation is the minimal unitary dilation of the contraction $P$, then the tetrablock unitary dilation is unique upto unitary equivalence.
\end{remark}

\section{Appendix}
Here we prove that $V_1C_2+C_1D_2=V_2C_1+C_2D_1$, as in equation (\ref{app}).
\\From the definition of the operators $V_1,V_2$ and $V_3$ on $\tilde{\mathcal{H}}$, it is easy to see that they have the following matrix forms with respect to the decomposition
$\mathcal{H} \oplus \mathcal{D}_{P^*}\oplus \mathcal{D}_{P^*}\oplus \mathcal{D}_{P^*} \oplus \cdots $
$$
\begin{pmatrix}
A & 0 & 0  & 0 &\dots \\
F_2^*D_P & F_1 & 0 & 0 &\dots  \\
0 & F_2^* & F_1 & 0& \dots \\
0 & 0 & F_2^* & F_1 & \dots\\
\vdots & \vdots & \vdots & \vdots &\ddots
\end{pmatrix}
,
\begin{pmatrix}
B & 0 & 0  & 0 &\dots \\
F_1^*D_P & F_2 & 0 & 0 &\dots  \\
0 & F_1^* & F_2 & 0& \dots \\
0 & 0 & F_1^* & F_2 & \dots\\
\vdots & \vdots & \vdots & \vdots &\ddots
\end{pmatrix}
\text{ and }
\begin{pmatrix}
P & 0 & 0  & 0 &\dots \\
D_P & 0 & 0 & 0 &\dots  \\
0 & I & 0 & 0& \dots \\
0 & 0 & I & 0 & \dots\\
\vdots & \vdots & \vdots & \vdots &\ddots
\end{pmatrix}$$ respectively.
The operators $C_1,C_2\text{ and }C_3:l^2({\mathcal{D}_{P^*}}) \to \tilde{\mathcal{H}}$ are of the form
$$
\begin{pmatrix}
D_{P^*}G_2 & 0 & 0  & 0 &\dots \\
-F_2^*P^* & 0 & 0 & 0 &\dots  \\
0 & 0 & 0 & 0& \dots \\
0 & 0 & 0 & 0 & \dots\\
\vdots & \vdots & \vdots & \vdots &\ddots
\end{pmatrix}
,
\begin{pmatrix}
D_{P^*}G_1 & 0 & 0  & 0 &\dots \\
-F_1^*P^* & 0 & 0 & 0 &\dots  \\
0 & 0 & 0 & 0& \dots \\
0 & 0 & 0 & 0 & \dots\\
\vdots & \vdots & \vdots & \vdots &\ddots
\end{pmatrix}
\text{ and }
\begin{pmatrix}
D_{P^*} & 0 & 0  & 0 &\dots \\
-P^* & 0 & 0 & 0 &\dots  \\
0 & 0 & 0 & 0& \dots \\
0 & 0 & 0 & 0 & \dots\\
\vdots & \vdots & \vdots & \vdots &\ddots
\end{pmatrix}
$$ respectively.Finally the operators $D_1,D_2$ and $D_3$ on $l^2(\mathcal{D}_{P^*})$ are of the form
$$
\begin{pmatrix}
G_1^* & G_2 & 0  & 0 &\dots \\
0 & G_1^* & G_2 & 0 &\dots  \\
0 & 0 & G_1^* & G_2 & \dots \\
0 & 0 &0  & G_1^* & \dots\\
\vdots & \vdots & \vdots & \vdots &\ddots
\end{pmatrix},
\begin{pmatrix}
G_2^* & G_1 & 0  & 0 &\dots \\
0 & G_2^* & G_1 & 0 &\dots  \\
0 & 0 & G_2^* & G_1 & \dots \\
0 & 0 &0  & G_2^* & \dots\\
\vdots & \vdots & \vdots & \vdots &\ddots
\end{pmatrix} \text{ and }
\begin{pmatrix}
0 & I & 0 & 0& \dots \\
0 & 0 & I & 0 & \dots\\
0 & 0 & 0 & I & \dots \\
0 & 0 & 0 & 0 & \dots \\
\vdots & \vdots & \vdots & \vdots &\ddots
\end{pmatrix}
$$ respectively.
\begin{eqnarray*}
&&(V_1C_2+C_1D_2)(a_0,a_1,a_2,\dots)
\\
&=&
V_1(D_{P^*}G_1a_0 \oplus (-F_1^*P^*a_0,0,0,\dots)) +C_1(G_2^*a_0+G_1a_1,G_2^*a_1+G_1a_2,G_2^*a_2+G_1a_3,\dots)
\\
&=&
(AD_{P^*}G_1a_0\oplus((F_2^*D_PD_{P^*}G_1-F_1F_1^*P^*)a_0,-F_2^*F_1^*P^*a_0,0,0,\dots))+
\\
&&
((D_{P^*}G_2G_2^*a_0+D_{P^*}G_2G_1a_1)\oplus(-F_2^*P^*G_2^*a_0-F_2^*P^*G_1a_1,0,0,\dots))
\\
&=&
((AD_{P^*}G_1+D_{P^*}G_2G_2^*)a_0+D_{P^*}G_2G_1a_1)\oplus
\\
&&
((F_2^*D_PD_{P^*}G_1-F_1F_1^*P^*-F_2^*P^*G_2^*)a_0-F_2^*P^*G_1a_1, -F_2^*F_1^*P^*a_0,0,0,\dots)
\end{eqnarray*}
and
\begin{eqnarray*}
&&(V_2C_1+C_2D_1)(a_0,a_1,a_2,\dots)
\\
&=&
V_2(D_{P^*}G_2a_0 \oplus (-F_2^*P^*a_0,0,0,\dots)) + C_2(G_1^*a_0+G_2a_1,G_1^*a_1+G_2a_2,G_1^*a_2+G_2a_3,\dots)
\\
&=&
(BD_{P^*}G_2a_0 \oplus ((F_1^*D_PD_{P^*}G_2-F_2F_2^*P^*)a_0,-F_1^*F_2^*P^*a_0,0,0,\dots)) +
\\
&& ((D_{P^*}G_1G_1^*a_0+D_{P^*}G_1G_2a_1)\oplus(-F_1^*P^*G_1^*a_0-F_1^*P^*G_2a_1,0,0,\dots))
\\
&=&
((BD_{P^*}G_2+D_{P^*}G_1G_1^*)a_0+D_{P^*}G_1G_2a_1) \oplus
\\
&&
((F_1^*D_PD_{P^*}G_2-F_2F_2^*P^*-F_1^*P^*G_1^*)a_0-F_1^*P^*G_2a_1,-F_1^*F_2^*P^*a_0,0,0,\dots).
\end{eqnarray*}
Hence $V_1C_2+C_1D_2=V_2C_1+C_2D_1$ holds if and only if the following holds:
\begin{enumerate}
\item[(a)] $AD_{P^*}G_1+D_{P^*}G_2G_2^*=BD_{P^*}G_2+D_{P^*}G_1G_1^*$, $D_{P^*}G_2G_1=D_{P^*}G_1G_2$,
\item[(b)] $F_2^*D_PD_{P^*}G_1-F_1F_1^*P^*-F_2^*P^*G_2^*=F_1^*D_PD_{P^*}G_2-F_2F_2^*P^*-F_1^*P^*G_1^*$, $F_2^*P^*G_1=F_1^*P^*G_2$, and
\item[(c)] $F_2^*F_1^*P^*=F_1^*F_2^*P^*$.
\end{enumerate}

Using Lemma \ref{tetra} we get that (a) holds if and only if
$$(G_1D_{P^*}+G_2^*D_{P^*}P^*)^*G_1+D_{P^*}G_2G_2^*=(G_2D_{P^*}+G_1^*D_{P^*}P^*)^*G_2+D_{P^*}G_1G_1^* $$ holds. After rearranging this equation we get
$$D_{P^*}G_1^*G_1+PD_{P^*}G_2G_1+D_{P^*}G_2G_2^*=D_{P^*}G_2^*G_2+PD_{P^*}G_1G_2+D_{P^*}G_1G_1^*$$ which is equivalent to saying that
 $D_{P^*}[G_1,G_1^*] +PD_{P^*}G_1G_2=D_{P^*}[G_2,G_2^*]+PD_{P^*}G_2G_1$, which is true since $G_1,G_2$ satisfy equation (\ref{Maa14}) in place of $F_1$ and $F_2$ respectively.
Hence $(a)$ holds.

Note that the first part of equation (b) is equivalent to
\begin{eqnarray*}
&&F_2^*D_PD_{P^*}G_1+F_2F_2^*P^*-F_2^*P^*G_2^*=F_1^*D_PD_{P^*}G_2+F_1F_1^*P^*-F_1^*P^*G_1^*
\\
&\Leftrightarrow& F_2^*(D_PD_{P^*}G_1-P^*G_2^*)+F_2F_2^*P^*=F_1^*(D_PD_{P^*}G_2-P^*G_1^*)+F_1F_1^*P^*
\\
&\Leftrightarrow& F_2^*(F_1^*D_PD_{P^*}-F_2P^*)+F_2F_2^*P^*=F_1^*(F_2^*D_PD_{P^*}-F_1P^*)+F_1F_1^*P^* \; [\text{ by Lemma \ref{tetralem3}}]
\\
&\Leftrightarrow& F_2^*F_1^*D_PD_{P^*}-F_2^*F_2P^*+F_2F_2^*P^*=F_1^*F_2^*D_PD_{P^*}-F_1^*F_1P^*+F_1F_1^*P^*
\\
&\Leftrightarrow& [F_2^*,F_1^*]D_PD_{P^*}+[F_2,F_2^*]P^*=[F_1,F_1^*]P^*,
\end{eqnarray*}
which is true by (\ref{Maa14}).
The second part of $(b)$ follows from Lemma \ref{tetralem4} and from (\ref{Maa14}).
Hence $(b)$ holds.

Equation $(c)$ is simply a consequence of (\ref{Maa14}).
Hence $V_1C_2+C_1D_2=V_2C_1+C_2D_1$. Consequently $R_1$ and $R_2$ commute.

Now let us show that $R_1U=UR_1$. From easy matrix calculations we get that $R_1$ and $U$ commute if and only if
the following holds:
$$V_1V_3=V_3V_1,D_1D_3=D_3D_1 \text{ and } V_1C_3+C_1D_3=V_3C_1+C_3D_1.$$
The first condition holds because $(V_1,V_2,V_3)$ is a tetrablock contraction. The operators $D_1$ and $D_3$ commute because their copy $M_{G_1+G_2^*z}$ and $M_z$ on $H^2_{\mathcal{D}_{P^*}}(\mathbb{D})$ do.
Now
\begin{eqnarray*}
&&V_1C_3+C_1D_3(a_0,a_1,a_2,\dots)
\\
&=&
V_1(D_{P^*}a_0 \oplus (-P^*a_0,0,0,\dots)) + C_1(a_1,a_2,a_3,\dots)
\\
&=&
(AD_{P^*}a_0 \oplus ((F_2^*D_PD_{P^*}-F_1P^*)a_0,-F_2^*P^*a_0,0,0,\dots)) \\
&& + \; (D_{P^*}G_2a_1 \oplus (-F_2^*P^*a_1,0,0,\dots))
\\
&=&
(AD_{P^*}a_0+D_{P^*}G_2a_1 )\oplus ((F_2^*D_PD_{P^*}-F_1P^*)a_0-F_2^*P^*a_1,-F_2^*P^*a_0,0,0,\dots)
\end{eqnarray*}
and
\begin{eqnarray*}
&&(V_3C_1+C_3D_1)(a_0,a_1,a_2,\dots)
\\
&=&
V_3(D_{P^*}G_2a_0 \oplus (-F_2^*P^*a_0,0,0,\dots))+C_3(G_1^*a_0+G_2a_1,G_1^*a_1+G_2a_2,G_1^*a_2+G_2a_3,\dots)
\\
&=&
(PD_{P^*}G_2a_0 \oplus (D_PD_{P^*}G_2a_0,-F_2^*P^*a_0,0,0,\dots)) \\
&& + \; ((D_{P^*}G_1^*a_0+D_{P^*}G_2a_1) \oplus (-P^*G_1^*a_0-P^*G_2a_1,0,0,\dots))
\\
&=&
((PD_{P^*}G_2+D_{P^*}G_1^*)a_0+D_{P^*}G_2a_1) \\
&& \oplus \; ((D_PD_{P^*}G_2-P^*G_1^*)a_0-P^*G_2a_1,-F_2^*P^*a_0,0,0,\dots).
\end{eqnarray*}

Note that the equality of $V_1C_3+C_1D_3$ and $V_3C_1+C_3D_1$ follows trivially from Lemma \ref{tetra}, Lemma \ref{tetralem3} and Lemma \ref{tetralem4}.
The commutativity of  $R_2$ and $U$ can be proved similarly, we skip this.
This completes the proof of $V_1C_2+C_1D_2=V_2C_1+C_2D_1$.

\end{document}